\newcommand\scalemath[2]{\scalebox{#1}{\mbox{\ensuremath{\displaystyle #2}}}}
\newtheorem{theorem}{Theorem}[section]
\newtheorem{corollary}[theorem]{Corollary}
\newtheorem{lemma}[theorem]{Lemma}
\newtheorem{proposition}[theorem]{Proposition}
\newtheorem{definition}[theorem]{Definition}
\newtheorem{example}[theorem]{Example}
\numberwithin{equation}{section}
\DeclareMathOperator*{\rep}{Rep}
\DeclareMathOperator*{\diag}{diag}
\DeclareMathOperator*{\res}{Res}
\begin{document}

\title[Three-term recurrence relations of minimal affinizations of type $G_2$]{Three-term recurrence relations of minimal affinizations of type $G_2$}
\author{Li Qiao and Jian-Rong Li}
\address{Li Qiao: Department of Mathematics, Lanzhou University, Lanzhou 730000, P. R. China.}
\email{qiaol12@lzu.edu.cn}
\address{Jian-Rong Li, Dept. of Mathematics, The Weizmann Institute of Science, Rehovot 7610001, Israel; Einstein Institute of Mathematics, The Hebrew University of Jerusalem, Jerusalem 9190401, Israel; and School of Mathematics and Statistics, Lanzhou University, Lanzhou 730000, P. R. China.}
\email{lijr07@gmail.com}
\date{}

\maketitle

\begin{abstract}
Minimal affinizations form a class of modules of quantum affine algebras introduced by Chari. We introduce a system of equations satisfied by the $q$-characters of minimal affinizations of type $G_2$ which we call the M-system of type $G_2$. The M-system of type $G_2$ contains all minimal affinizations of type $G_2$ and only contains minimal affinizations. The equations in the M-system of type $G_2$ are three-term recurrence relations. The M-system of type $G_2$ is much simpler than the extended T-system of type $G_2$ obtained by Mukhin and the second author. We also interpret the three-term recurrence relations in the M-system of type $G_2$ as exchange relations in a cluster algebra constructed by Hernandez and Leclerc.

\hspace{0.15cm}

\noindent
{\bf Key words}: quantum affine algebras of type $G_2$; minimal affinizations; $q$-characters; Frenkel-Mukhin algorithm; M-systems; cluster algebras

\hspace{0.15cm}

\noindent
{\bf 2010 Mathematics Subject Classification}: 17B37
\end{abstract}

\section{Introduction}
Let $\mathfrak{g}$ be a simple Lie algebra and $U_q \widehat{\mathfrak{g}}$ the corresponding quantum affine algebra. Minimal affinizations are simple modules of $U_q \widehat{\mathfrak{g}}$ which were introduced by Chari in \cite{C95}. The family of minimal affinizations contains the celebrated Kirillov-Reshetikhin modules. Minimal affinizations are studied intensively in recent years, see for example, \cite{CMY13}, \cite{CG11}, \cite{H07}, \cite{LM13}, \cite{LN15}, \cite{M10}, \cite{MP11}, \cite{MY12a}, \cite{MY12b}, \cite{MY14}, \cite{Nao13}, \cite{ZDLL15}.

The aim of this paper is to study three-term recurrence relations satisfied by the $q$-characters of minimal affinizations of type $G_2$. The set of minimal affinizations of type $G_2$ can be divided into two sets $X_1$, $X_2$ according to their highest $l$-weights. The minimal affinizations in $X_1$ have highest $l$-weight monomials of the form (see Section \ref{definition of minimal affinizations})
\begin{align*}
T_{k, l}^{(s)} =\left( \prod_{i=0}^{k-1} 1_{s+6i} \right) \left( \prod_{j=0}^{l-1} 2_{s+6k+2j+1} \right)
\end{align*}
and the minimal affinizations in $X_2$ have highest $l$-weight monomials of the form
\begin{align*}
\widetilde{T}_{k, l}^{(s)} =\left(\prod_{i=0}^{l-1} 2_{-s-6k-2i-1}\right)\left(\prod_{j=0}^{k-1} 1_{-s-6j}\right).
\end{align*}

We introduce a system of equations which we call the M-system of type $G_2$ and prove that the equations in the M-system of type $G_2$ are satisfied by the $q$-characters of minimal affinizations.

The equations in the first part of the M-system of type $G_2$ are three-term recurrence relations:
\begin{eqnarray}
[\mathcal T_{k, l}^{(s)}][\mathcal T_{k, 0}^{(s+6)}]&=&[\mathcal T_{k+1, 0}^{(s)}][\mathcal T_{k-1, l}^{(s+6)}]+[\mathcal T_{0, 3k+l}^{(s)}] \quad (k \in \mathbb{Z}_{\geq 1}, l \in \{1, 2, 3\}), \\
{[\mathcal T_{k, l+3}^{(s)}]}[\mathcal T_{k, l}^{(s+6)}]&=&[\mathcal T_{k+1, l}^{(s)}][\mathcal T_{k-1, l+3}^{(s+6)}]+[\mathcal T_{0, l}^{(s+6k+6)}][\mathcal T_{0, 3k+l+3}^{(s)}] \quad (k, l \in \mathbb{Z}_{\geq 1}),
\end{eqnarray}
see Theorem \ref{first part of the M system of type G2}. They are satisfied by the $q$-characters of the minimal affinizations in $X_1$. Here we use $\mathcal{T}$ to denote a module with highest $l$-weight $T$.

The equations in the second part of the M-system of type $G_2$ are three-term recurrence relations:
\begin{eqnarray*}
[\widetilde{\mathcal T}_{k, l}^{(s)}][\widetilde{\mathcal T}_{k, 0}^{(s+6)}]&=&[\widetilde{\mathcal T}_{k+1, 0}^{(s)}][\widetilde{\mathcal T}_{k-1, l}^{(s+6)}]+[\widetilde{\mathcal T}_{0, 3k+l}^{(s)}] \quad (k \in \mathbb{Z}_{\geq 1}, l \in \{1, 2, 3\}), \\
{[\widetilde{\mathcal T}_{k, l+3}^{(s)}]}[\widetilde{\mathcal T}_{k, l}^{(s+6)}]&=&[\widetilde{\mathcal T}_{k+1, l}^{(s)}][\widetilde{\mathcal T}_{k-1, l+3}^{(s+6)}]+[\widetilde{\mathcal T}_{0, l}^{(s+6k+6)}][\widetilde{\mathcal T}_{0, 3k+l+3}^{(s)}] \quad (k, l\in \mathbb{Z}_{\geq 1}),
\end{eqnarray*}
see Theorem \ref{second part of M system}. They are satisfied by the $q$-characters of the minimal affinizations in $X_2$.

The extended T-system of type $G_2$ obtained by Mukhin and the second author in \cite{LM13} contains all minimal affinizations of type $G_2$ and some other modules which are not minimal affinizations. The M-system of type $G_2$ also contains all minimal affinizations of type $G_2$. But unlike the extended T-system of type $G_2$, the M-system of type $G_2$ contains only minimal affinizations of type $G_2$. The M-system of type $G_2$ is much simpler than the extended T-system of type $G_2$.

The equations the M-system of type $G_2$ can be interpreted as exchange relations in a certain cluster algebra $\mathscr{A}$ constructed by Hernandez and Leclerc in \cite{HL16}, see Section \ref{interpret M systems as exchange relations}. In the paper \cite{HL16}, the equations in the usual T-systems are interpreted as exchange relations in some cluster algebras. The T-system of type $G_2$ and the M-system of type $G_2$ are special cases of exchange relations in the cluster algebra $\mathscr{A}$.

We also used the M-system of type $G_2$ to compute the decomposition of a minimal affinization of type $G_2$ as a $U_q\mathfrak{g}$-module into simple $U_q\mathfrak{g}$-modules. This helps us to obtain the general decomposition formula in \cite{LN15}.

We show that the modules associated to the summands on the right hand side of each equation in the M-system are simple.

The paper is organized as follows. In Section 2, we give some background information about finite-dimensional representations of quantum affine algebras and cluster algebras. In Section \ref{sec: first part of the M system of type G2}, we describe the M-system of type $G_2$. In Section \ref{interpret M systems as exchange relations}, we interpret the equations in the M-system of type $G_2$ as exchange relations. In Section \ref{proof main1} and \ref{proof simplicity of the modules on the right hand side} we prove Theorem \ref{first part of the M system of type G2}. In Section \ref{sec: second part of M system}, we prove Theorem \ref{second part of M system}.

\section{Background}

\subsection{The quantum affine algebra of type $G_2$}

In this paper, we take $\mathfrak{g}$ to be the complex simple Lie algebra of type $G_2$ and $\mathfrak{h}$ a Cartan subalgebra of $\mathfrak{g}$. Let $I=\{1, 2\}$.
We choose simple roots $\alpha_1, \alpha_2$ and scalar product $(\cdot, \cdot)$ such that
\begin{align*}
( \alpha_1, \alpha_1 ) = 6, \ ( \alpha_1, \alpha_2 )=-3, \ ( \alpha_2, \alpha_2 )=2.
\end{align*}
Therefore $\alpha_1$ is the long simple root and $\alpha_2$ is the short simple root. Let $\{\alpha_1^{\vee}, \alpha_2^{\vee}\}$ and $\{\omega_1, \omega_2\}$ be the sets of simple coroots and fundamental weights respectively. Let $C=(C_{ij})_{i, j\in I}$ denote the Cartan matrix, where $C_{ij} =\frac{2 ( \alpha_i, \alpha_j ) }{( \alpha_i, \alpha_i )}$. Let $r_1=3, r_2=1$, $D=\diag (r_1, r_2)$ and $B=DC$.
Then
\begin{align*}
C=
\left(
\begin{array}{cc}
2 & -1 \\
-3 & 2
\end{array}
\right), \quad
B=
\left(
\begin{array}{cc}
6 & -3 \\
-3 & 2
\end{array}
\right).
\end{align*}

Let $Q$ (resp. $Q^+$) and $P$ (resp. $P^+$) denote the $\mathbb{Z}$-span (resp. $\mathbb{Z}_{\geq 0}$-span) of the simple roots and fundamental weights respectively. Let $\leq$ be the partial order on $P$ in which $\lambda \leq \lambda'$ if and only if $\lambda' - \lambda \in Q^+$.

Let $\widehat{\mathfrak{g}}$ denote the untwisted affine algebra corresponding to $\mathfrak{g}$. Fix a $q\in \mathbb{C}^{\times}$, not a root of unity. Let $q_i=q^{r_i}, i=1, 2$. Let $\mathcal{P}$ the free abelian multiplicative group of monomials in infinitely many formal variables $(Y_{i, a})_{i\in I, a \in \mathbb{C}^{\times}}$.

The quantum affine algebra $U_q\widehat{\mathfrak{g}}$ in Drinfeld's new realization, see \cite{Dri88}, is generated by $x_{i, n}^{\pm}$ ($i\in I, n\in \mathbb{Z}$), $k_i^{\pm 1}$ $(i\in I)$, $h_{i, n}$ ($i\in I, n\in \mathbb{Z}\backslash \{0\}$) and central elements $c^{\pm 1/2}$, subject to certain relations.

The quantum affine algebra $U_q\widehat{\mathfrak{g}}$ contains two standard quantum affine algebras of type $A_1$. The first one is $U_{q_1} \widehat{\mathfrak{sl}}_2$ generated by $x_{1, n}^{\pm}$ ($n\in \mathbb{Z}$), $k_1^{\pm 1}$, $h_{1, n}$ ($n\in \mathbb{Z}\backslash \{0\}$) and central elements $c^{\pm 1/2}$. The second one is $U_{q_2} \widehat{\mathfrak{sl}}_2 $ generated by $x_{2, n}^{\pm}$ ($n\in \mathbb{Z}$), $k_2^{\pm 1}$, $h_{2, n}$ ($n\in \mathbb{Z}\backslash \{0\}$) and central elements $c^{\pm 1/2}$.

The subalgebra of $U_q\widehat{\mathfrak{g}}$ generated by $(k_i^{\pm})_{i\in I}, (x_{i, 0}^{\pm})_{i\in I}$ is a Hopf subalgebra of $U_q\widehat{\mathfrak{g}}$ and is isomorphic as a Hopf algebra to $U_q\mathfrak{g}$. Therefore $U_q\widehat{\mathfrak{g}}$-modules restrict to $U_q\mathfrak{g}$-modules.

\subsection{Finite-dimensional representations of $U_q \widehat{\mathfrak{g}}$ and $q$-characters} In this section, we recall the standard facts about finite-dimensional $U_q\widehat{\mathfrak{g}}$-modules and $q$-characters of these representations, see \cite{CP94}, \cite{CP95a}, \cite{FR98}, \cite{MY12a}.

A representation $V$ of $U_q\widehat{\mathfrak{g}}$ is of type $1$ if $c^{\pm 1/2}$ acts as the identity on $V$ and
\begin{align} \label{decomposition}
V=\bigoplus_{\lambda \in P} V_{\lambda}, \ V_{\lambda} = \{v\in V  :   k_i v=q^{( \alpha_i, \lambda )} v \}.
\end{align}
In the following, all representations will be assumed to be finite-dimensional and of type $1$ without further comment. The decomposition (\ref{decomposition}) of a finite-dimensional representation $V$ into its $U_q\mathfrak{g}$-weight spaces can be refined by decomposing it into the Jordan subspaces of the mutually commuting operators $\phi_{i, \pm r}^{\pm}$, see \cite{FR98}:
\begin{align}
V=\bigoplus_{\gamma} V_{\gamma}, \ \gamma=(\gamma_{i, \pm r}^{\pm})_{i\in I, r\in \mathbb{Z}_{\geq 0}}, \ \gamma_{i, \pm r}^{\pm} \in \mathbb{C},
\end{align}
where
\begin{align*}
V_{\gamma} = \{v\in V  :   \exists k\in \mathbb{N}, \forall i \in I, m \geq 0, (\phi_{i, \pm m}^{\pm} - \gamma_{i, \pm m}^{\pm})^{k} v =0 \}.
\end{align*}
Here $\phi_{i, n}^{\pm}$'s are determined by the formula
\begin{align}
\phi_i^{\pm}(u) := \sum_{n=0}^{\infty} \phi_{i, \pm n}^{\pm} u^{\pm n} = k_i^{\pm 1} \exp\left( \pm (q-q^{-1}) \sum_{m =1}^{\infty} h_{i, \pm m}u^{\pm m}\right).
\end{align}
If $\dim(V_{\gamma})>0$, then $\gamma$ is called an \textit{$l$-weight} of $V$. Let $\gamma$ be the $l$-weight of a finite dimensional $U_q\widehat{\mathfrak{g}}$-module. In \cite{FR98}, it is shown $\gamma$ satisfies
\begin{align}
\gamma_i^{\pm}(u) := \sum_{r=0}^{\infty} \gamma_{i, \pm r}^{\pm} u^{\pm r} = q_i^{\deg Q_i - \deg R_i} \frac{Q_i(uq_i^{-1})R_i(uq_i)}{Q_{i}(uq_i) R_{i}(uq_i^{-1})}, \label{gamma}
\end{align}
where the right hand side is to be treated as a formal series in positive (resp. negative) integer powers of $u$, and $Q_i, R_i$ are polynomials of the form
\begin{align}
Q_i(u) = \prod_{a \in \mathbb{C}^{\times}} (1-ua)^{w_{i, a}}, \ R_{i}(u)=\prod_{a \in \mathbb{C}^{\times}}(1-ua)^{x_{i, a}}, \label{QR}
\end{align}
for some $w_{i, a}, x_{i, a} \in \mathbb{Z}_{\geq 0}, i\in I, a \in \mathbb{C}^{\times}$. Let $\mathcal{P}$ denote the free abelian multiplicative group of monomials in infinitely many formal variables $(Y_{i, a})_{i\in I, a \in \mathbb{C}^{\times}}$. There is a bijection $\gamma$ from $\mathcal{P}$ to the set of $l$-weights of finite-dimensional modules such that for the monomial $m =\prod_{i\in I, a \in \mathbb{C}^{\times}} Y_{i, a}^{w_{i, a}-x_{i, a}}$, the $l$-weight $\gamma(m)$ is given by (\ref{gamma}), (\ref{QR}).

Let $\mathbb{Z}\mathcal{P} = \mathbb{Z}[Y_{i, a}^{\pm 1}]_{i\in I, a \in \mathbb{C}^{\times}}$ be the group ring of $\mathcal{P}$. For $\chi \in \mathbb{Z}\mathcal{P}$, we write $m\in \mathcal{P}$ if the coefficient of $m$ in $\chi$ is non-zero.

The $q$-character of a $U_q\widehat{\mathfrak{g}}$-module $V$ is defined by
\begin{align*}
\chi_q(V) = \sum_{m\in \mathcal{P}} \dim(V_{m}) m \in \mathbb{Z}\mathcal{P},
\end{align*}
where $V_m = V_{\gamma(m)}$, see \cite{FR98}.

Let $\rep(U_q\widehat{\mathfrak{g}})$ be the Grothendieck ring of finite-dimensional $U_q\widehat{\mathfrak{g}}$-modules and $[V]\in \rep(U_q\widehat{\mathfrak{g}})$ the class of a finite-dimensional $U_q\widehat{\mathfrak{g}}$-module $V$. The $q$-character map defines an injective ring homomorphism, see \cite{FR98},
\begin{align*}
\chi_q: \rep(U_q\widehat{\mathfrak{g}}) \to \mathbb{Z}\mathcal{P}.
\end{align*}

For any finite-dimensional $U_q\widehat{\mathfrak{g}}$-module $V$, we use $m \in \chi_q(V)$ to denote that $m$ is a monomial in $\chi_q(V)$. For each $j\in I$, a monomial $m =\prod_{i\in I, a \in \mathbb{C}^{\times}} Y_{i, a}^{u_{i, a}}$, where $u_{i, a}$ are some integers, is said to be \textit{$j$-dominant} (resp. \textit{$j$-anti-dominant}) if and only if $u_{j, a} \geq 0$ (resp. $u_{j, a} \leq 0$) for all $a \in \mathbb{C}^{\times}$. A monomial is called \textit{dominant} (resp. \textit{anti-dominant}) if and only if it is $j$-dominant (resp. $j$-anti-dominant) for all $j\in I$. Let $\mathcal{P}^+ \subset \mathcal{P}$ denote the set of all dominant monomials.

Let $V$ be a $U_q\widehat{\mathfrak{g}}$-module and $m\in \chi_q(V)$ a monomial. A non-zero vector $v\in V_m$ is called a \textit{highest $l$-weight vector} with \textit{highest $l$-weight} $\gamma(m)$ if
\begin{align*}
x_{i, r}^{+} \cdot v=0, \ \phi_{i, \pm t}^{\pm} \cdot v=\gamma(m)_{i, \pm t}^{\pm} v, \ \forall i\in I, r\in \mathbb{Z}, t\in \mathbb{Z}_{\geq 0}.
\end{align*}
The module $V$ is called a \textit{highest $l$-weight module} if $V=U_q\widehat{\mathfrak{g}}\cdot v$ for some highest $l$-weight vector $v\in V$.

In \cite{CP94}, \cite{CP95a}, it is shown that there is a one to one correspondence between dominant $l$-weights and finite-dimensional simple $U_q\widehat{\mathfrak{g}}$-modules. Therefore for every $m_+ \in \mathcal{P}^+$, there is a unique finite-dimensional simple $U_q\widehat{\mathfrak{g}}$-module $L(m_+)$. We use $\chi_q(m_+)$ to denote $\chi_q(L(m_+))$.

Let $p_1, p_2$ be two polynomials in $\mathbb{Z}[Y_{i,a}^{\pm 1}]_{i \in I, a \in \mathbb{C}^{\times}}$. If $m$ is a monomial in the polynomial, then we write $m \in p_1$. If $m \in p_1$ and $m \in p_2$, then we write $m \in p_1 \cap p_2$. If all monomials in $p_1$ are in $p_2$, then we write $p_1 \subseteq p_2$.

The following lemma is well-known.
\begin{lemma}
\label{contains in a larger set}
Let $m_1, m_2$ be two dominant monomials. Then $L(m_1m_2)$ is a sub-quotient of $L(m_1) \otimes L(m_2)$. In particular, $\chi_q(m_1m_2) \subseteq \chi_q(m_1 ) \chi_q( m_2)$.   $\Box$
\end{lemma}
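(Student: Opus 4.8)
The plan is to prove Lemma~\ref{contains in a larger set} using the general theory of highest $\ell$-weight modules and the fact that $\chi_q$ is an injective ring homomorphism. First I would recall that the tensor product $L(m_1) \otimes L(m_2)$ is a highest $\ell$-weight module: if $v_1 \in L(m_1)$ and $v_2 \in L(m_2)$ are the highest $\ell$-weight vectors, then $v_1 \otimes v_2$ generates a highest $\ell$-weight submodule of $L(m_1) \otimes L(m_2)$ whose highest $\ell$-weight is $\gamma(m_1)\gamma(m_2) = \gamma(m_1 m_2)$; this uses the compatibility of the Drinfeld coproduct (or the triangular-type arguments of Chari--Pressley) with the action of the $x_{i,r}^+$ and $\phi_{i,\pm t}^\pm$. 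Since $\gamma$ is a group homomorphism and $m_1 m_2 \in \mathcal{P}^+$ is again dominant, the unique irreducible highest $\ell$-weight quotient of $U_q\widehat{\mathfrak{g}} \cdot (v_1 \otimes v_2)$ is precisely $L(m_1 m_2)$. Hence $L(m_1 m_2)$ appears as a sub-quotient of $L(m_1) \otimes L(m_2)$.

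From here the inclusion of monomial sets follows formally. Since $\chi_q$ is a ring homomorphism, $\chi_q(L(m_1) \otimes L(m_2)) = \chi_q(L(m_1))\chi_q(L(m_2))$, so the set of monomials occurring in $\chi_q(L(m_1)\otimes L(m_2))$ is contained in $\mathscr{M}(L(m_1))\mathscr{M}(L(m_2))$ (a monomial in a product of two Laurent polynomials with nonnegative integer coefficients is a product of a monomial from each factor). On the other hand, because $[L(m_1)\otimes L(m_2)]$ decomposes in the Grothendieck ring as a sum of classes of simple modules with nonnegative multiplicities, and $L(m_1 m_2)$ occurs among them with multiplicity at least one, we get $\chi_q(L(m_1 m_2)) \leq \chi_q(L(m_1)\otimes L(m_2))$ coefficientwise; taking supports yields $\mathscr{M}(L(m_1 m_2)) \subseteq \mathscr{M}(L(m_1))\mathscr{M}(L(m_2))$.

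The only genuinely non-formal point is the first claim, that $v_1 \otimes v_2$ is a highest $\ell$-weight vector of weight $\gamma(m_1 m_2)$ in $L(m_1)\otimes L(m_2)$; this I would either cite directly from Chari--Pressley \cite{CP94}, \cite{CP95a} (where the submodule generated by the tensor of highest $\ell$-weight vectors is shown to be highest $\ell$-weight) or reduce to it. I expect this to be the main obstacle only in the bookkeeping sense: one must check that the $x_{i,r}^+$ annihilate $v_1 \otimes v_2$ and that the $\phi_{i,\pm t}^\pm$ act by the product of the scalars, which is exactly the statement that $\gamma$ is multiplicative together with the structure of the coproduct on the Drinfeld loop generators. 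Everything else is a formal consequence of $\chi_q$ being an injective ring homomorphism and of the Jordan--Hölder decomposition in $\rep(U_q\widehat{\mathfrak{g}})$, so the proof is short.
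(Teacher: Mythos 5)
The paper offers no proof of this lemma at all --- it is stated as ``well-known'' and closed with an immediate $\Box$ --- so there is nothing to compare against except the standard argument, and that is exactly what you give. Your outline is correct: identify $L(m_1m_2)$ as the simple head of the highest $\ell$-weight submodule generated by $v_1\otimes v_2$, then deduce the inclusion of monomial sets from the fact that $\chi_q$ is a ring homomorphism and that $[L(m_1)\otimes L(m_2)]$ is a nonnegative sum of simples containing $[L(m_1m_2)]$. One small caution on the ``only non-formal point'': with the standard (Drinfeld--Jimbo) coproduct the operators $x_{i,r}^+$ do \emph{not} literally annihilate $v_1\otimes v_2$, since the coproduct of the loop generators has cross terms; the clean statements are either (a) the Drinfeld coproduct version, where $\Delta(x_i^+(z))=x_i^+(z)\otimes 1+\phi_i^-(z)\otimes x_i^+(z)$ does kill $v_1\otimes v_2$, or (b) the Chari--Pressley result that the cyclic submodule $U_q\widehat{\mathfrak{g}}\cdot(v_1\otimes v_2)$ is a highest $\ell$-weight module with highest $\ell$-weight $\gamma(m_1)\gamma(m_2)$, which you cite and which is all that is needed. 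An alternative route that avoids the coproduct entirely: since $\mathscr{M}(L(m_i))\subset m_i\mathcal{Q}^-$, the monomial $m_1m_2$ is the unique maximal dominant monomial in $\chi_q(L(m_1))\chi_q(L(m_2))$ for the partial order (\ref{partial order of monomials}), so $L(m_1m_2)$ must occur in the Jordan--H\"older decomposition of $L(m_1)\otimes L(m_2)$. Either way, your proof is sound.
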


For $b\in \mathbb{C}^{\times}$, define the shift of spectral parameter map $\tau_b: \mathbb{Z}\mathcal{P} \to \mathbb{Z}\mathcal{P}$ to be a homomorphism of rings sending $Y_{i, a}^{\pm 1}$ to $Y_{i, ab}^{\pm 1}$. Let $m_1, m_2 \in \mathcal{P}^+$. If $\tau_b(m_1) = m_2$, then
\begin{align} \label{shift}
\tau_b \chi_q(m_1) = \chi_q(m_2).
\end{align}

A finite-dimensional $U_q\widehat{\mathfrak{g}}$-module $V$ is said to be \textit{special} if and only if $\chi_q(V)$ contains exactly one dominant monomial. It is called \textit{anti-special} if and only if $\chi_q(V)$ contains exactly one anti-dominant monomial. It is said to be \textit{prime} if and only if it is not isomorphic to a tensor product of two non-trivial $U_q\widehat{\mathfrak{g}}$-modules, see \cite{CP97}. Clearly, if a module is special or anti-special, then it is simple.

Define $A_{i, a} \in \mathcal{P}, i\in I, a \in \mathbb{C}^{\times}$, by
\begin{align*}
A_{1, a} = Y_{1, aq^{3}}Y_{1, aq^{-3}} Y_{2, aq^{-2}}^{-1} Y_{2, a}^{-1} Y_{2, aq^{2}}^{-1}, \qquad
A_{2, a} = Y_{2, aq}Y_{2, aq^{-1}} Y_{1, a}^{-1}.
\end{align*}
When $a \in \mathbb{C}^{\times}$ is fixed, we write $A_{i, s}=A_{i, aq_i^{s}}$.

Let $\mathcal{Q}$ be the subgroup of $\mathcal{P}$ generated by $A_{i, a}, i\in I, a \in \mathbb{C}^{\times}$. Let $\mathcal{Q}^{\pm}$ be the monoids generated by $A_{i, a}^{\pm 1}, i\in I, a \in \mathbb{C}^{\times}$. There is a partial order $\leq$ on $\mathcal{P}$ in which
\begin{align}
m\leq m' \text{ if and only if } m'm^{-1} \in \mathcal{Q}^{+}. \label{partial order of monomials}
\end{align}
For all $m_+ \in \mathcal{P}^+$, $\chi_q(m_+) \subset m_+\mathcal{Q}^{-}$, see \cite{FM01}.

\begin{definition} [{\cite{FM01}}]
Let $m$ be a monomial. Suppose that for all $a \in \mathbb{C}^{\times}$ and $i\in I$, we have the property: if the power of $Y_{i, a}$ in $m$ is non-zero and the power of $Y_{j, aq^k}$ in $m$ is zero for all $j \in I, k\in \mathbb{Z}_{>0}$, then the power of $Y_{i, a}$ in $m$ is negative. Then the monomial $m$ is called \textit{right negative}.
\end{definition}

\begin{lemma} [{\cite{FM01}, \cite{H07}}] \label{properties of right negative monomials}
For $i\in I, a \in \mathbb{C}^{\times}$, $A_{i,a}^{-1}$ is right-negative. A product of right-negative monomials is right-negative. If $m$ is right-negative and $m'\leq m$, then $m'$ is right-negative.
\end{lemma}

\begin{lemma} [{Lemma 4.4, \cite{H06}}] \label{non highest monomials in q characters of KR modules are right negative}
All monomials in the $q$-character of a Kirillov-Reshetikhin module is right-negative except the highest $l$-weight monomial.
\end{lemma}

We need the following result from \cite{FM01}, \cite{HL10}.
\begin{proposition}[{Proposition 5.3, \cite{HL10}}] \label{dominant monomials determine q-characters}
Let $V, W$ be two $U_q\widehat{\mathfrak{g}}$-modules. If $\chi_q(V)$ and $\chi_q(W)$ have the same dominant monomials with the same multiplicities, then $\chi_q(V) = \chi_q(W)$.
\end{proposition}

\subsection{Minimal affinizations of $U_q \mathfrak{g}$-modules} \label{definition of minimal affinizations}
Let $\lambda = k\omega_1 + l \omega_2$. A simple $U_q \widehat{\mathfrak{g}} $-module $L(m_+)$ is a \textit{minimal affinization} of $V(\lambda)$ if and only if $m_+$ is one of the following monomials
\begin{align*}
\left( \prod_{i=0}^{k-1} Y_{1, aq^{6i}} \right) \left( \prod_{i=0}^{l-1} Y_{2, aq^{6k+2i+1}} \right), \qquad
\left(\prod_{i=0}^{l-1}Y_{2, aq^{-6k-2i-1}}\right)\left(\prod_{j=0}^{k-1}Y_{1, aq^{-6j}}\right),
\end{align*}
for some $a \in \mathbb{C}^{\times}$, see \cite{CP95b}.

From now on, we fix an $a \in \mathbb{C}^{\times}$ and denote $i_s = Y_{i, aq^s}$, $i \in I$, $s \in \mathbb{Z}$. Without loss of generality, we may assume that a simple $U_q \widehat{\mathfrak{g}} $-module $L(m_+)$ is a minimal affinization of $V(\lambda)$ if and only if $m_+$ is one of the following monomials
\begin{align*}
T_{k, l}^{(s)} =\left( \prod_{i=0}^{k-1} 1_{s+6i} \right) \left( \prod_{j=0}^{l-1} 2_{s+6k+2j+1} \right), \quad
\widetilde{T}_{k, l}^{(s)} =\left(\prod_{i=0}^{l-1} 2_{-s-6k-2i-1}\right)\left(\prod_{j=0}^{k-1} 1_{-s-6j}\right).
\end{align*}

\subsection{$q$-characters of $U_q \widehat{\mathfrak{sl}}_2$-modules and the Frenkel-Mukhin algorithm}
We recall the results of the $q$-characters of $U_q \widehat{\mathfrak{sl}}_2$-modules and Frenkel-Mukhin algorithm, see \cite{CP91}, \cite{FR98}, \cite{FM01}, \cite{H05}, \cite{H08}.

Let $W_{k}^{(a)}$ be the simple $U_q \widehat{\mathfrak{sl}}_2$-module with
highest weight monomial
\begin{align*}
X_{k}^{(a)} =\prod_{i=0}^{k-1} Y_{aq^{k-2i-1}},
\end{align*}
where $Y_a=Y_{1, a}$. Then the $q$-character of $W_{k}^{(a)}$ is given by
\begin{align} \label{q-characters of Uqsl_2 KR module}
\chi_q(W_{k}^{(a)})=X_{k}^{(a)} \sum_{i=0}^{k} \prod_{j=0}^{i-1} A_{aq^{k-2j}}^{-1},
\end{align}
where $A_a=Y_{aq^{-1}}Y_{aq}$.

For $a \in \mathbb{C}^{\times}, k\in \mathbb{Z}_{\geq 1}$, the set $\Sigma_{k}^{(a)} =\{aq^{k-2i-1}\}_{i=0, \ldots, k-1}$ is called a \textit{$q$-string}. Two $q$-strings $\Sigma_{k}^{(a)}$ and $\Sigma_{k'}^{(a')}$ are said to be in \textit{general position} if the union $\Sigma_{k}^{(a)} \cup \Sigma_{k'}^{(a')}$ is not a $q$-string or $\Sigma_{k}^{(a)} \subset \Sigma_{k'}^{(a')}$ or $\Sigma_{k'}^{(a')} \subset \Sigma_{k}^{(a)}$.

Denote by $L(m_+)$ the simple $U_q \widehat{\mathfrak{sl}}_2$-module with
highest weight monomial $m_+$. Let $m_{+} \neq 1$ and $\in \mathbb{Z}[Y_a]_{a \in \mathbb{C}^{\times}}$ be a dominant monomial. Then $m_+$ can be uniquely (up to permutation) written in the form
\begin{align*}
m_+=\prod_{i=1}^{s} \left( \prod_{b\in \Sigma_{k_i}^{(a_i)}} Y_{b} \right),
\end{align*}
where $s$ is an integer, $\Sigma_{k_i}^{(a_i)}, i=1, \ldots, s$, are $q$-strings which are pairwise in general position and
\begin{align} \label{q-characters of Uqsl_2 module}
L(m_+)=\bigotimes_{i=1}^s W_{k_i}^{(a_i)}, \qquad \chi_q(L(m_+))=\prod_{i=1}^s \chi_q(W_{k_i}^{(a_i)}).
\end{align}

Let $i \in I$. We also call $n=Y_{i, a}Y_{i, aq_i^{2}} \cdots Y_{i, aq_i^{2k-2}}$ a $q_i$-string in a monomial $m$ if $n$ is a factor of $m$. We say that two $q_i$-strings $n_1$ and $n_2$ are in general position if $n_1 n_2$ is not a $q_i$-string or $n_1$ is a factor of $n_2$ or $n_2$ is a factor of $n_1$.

The Frenkel-Mukhin algorithm is very powerful to compute $q$-characters of simple $U_q \mathfrak{g}$-modules, \cite{FM01}. Let $m_+$ be a dominant monomial. Roughly speaking, when the Frenkel-Mukhin algorithm computes $\chi_q(m_+)$, the algorithm starts with $m_+$ and gradually expand it in all possible $U_{q_i}\widehat{\mathfrak{sl}}_2$-directions ($i \in I$).

Although in some cases the algorithm may fail, it works for a large family of modules. In particular, if a module $L(m_+)$ is special, then we can use Frenkel-Mukhin algorithm to compute its $q$-character, see \cite{FM01}.

\begin{theorem}[{Theorem 3.8, \cite{H07}, Proposition 7.1, Theorem 7.2, \cite{LM13}}] \label{minimal affinizations in X1 are special}
The minimal affinizations in the first (resp. second) part of the M-system of type $G_2$ are special (resp. anti-special). Therefore we can use the Frenkel-Mukhin algorithm to compute the $q$-characters of the minimal affinizations in the first part of the M-system of type $G_2$.
\end{theorem}

We will need the following result from Section 5 of \cite{HL10}. Let $m$ be an $i$-dominant monomial and $\varphi_i(m)$ a polynomial defined as follows. Let $\overline{m}$ be the monomial obtained from $m$ by replacing $Y_{j,a}$ by $Y_a$ if $j = i$ and by $1$ if $j \neq i$. Then the $q$-character $\chi_q(L(\overline{m}))$ of the $U_q \widehat{\mathfrak{sl}}_2$-module $L(\overline{m})$ is given by (\ref{q-characters of Uqsl_2 KR module}), (\ref{q-characters of Uqsl_2 module}). Write $\chi_q(L(\overline{m})) = \overline{m}(1 + \sum_{p} \overline{M}_p)$, where the $\overline{M}_p$ are monomials in the variables $A^{-1}_{a}$ $(a \in \mathbb{C}^{\times})$. Let $\varphi_i(m) := m(1 + \sum_p M_p)$ where each $M_p$ is obtained from the corresponding $\overline{M}_p$ by replacing each variable $A^{-1}_{a}$ by $A^{-1}_{i,a}$.

\begin{theorem}[{Section 5.3, \cite{HL10}}] \label{Uqsl_2 arguments}
Let $m$ be a dominant monomial and let $mM$ be a monomial of $\chi_q(L(m))$, where $M$ is a monomial in $A_{j,a}^{-1}$ ($j \in I$). If $M$ contains no $A_{i,a}^{-1}$, then
$mM$ is $i$-dominant and $\varphi_i(mM)$ is contained in $\chi_q(L(m))$. In particular, $\varphi_i(m)$ is contained in $\chi_q(L(m))$.
\end{theorem}

Let $i \in I$ and $\beta_i: \mathcal{P} \to \mathcal{P}$ be a map such that $\beta_i(m)$ is obtained from $m \in \mathcal{P}$ by replacing all $Y_{j,a}$ by $1$, $j \neq i$. For example, $\beta_1(1_0 1_6 1_{12}^{-1} 2_1 2_3) = 1_0 1_6 1_{12}^{-1}$.

By the Frenkel-Mukhin algorithm \cite{FM01} and the formulas (\ref{q-characters of Uqsl_2 KR module}), (\ref{q-characters of Uqsl_2 module}), we have the following result which is used frequently in our proof.
\begin{lemma} \label{need to expand from right most factors}
Let $m_+$ be a dominant monomial. Then every monomial in $\chi_q(m_+)$ is a monomial in some $\varphi_i(m)$, where $i \in I$ and $m$ is an $i$-dominant monomial in $\chi_q(m_+)$. The $l$-weights of the monomials in $\varphi_i(m)$ are less or equal to the $l$-weight of $m$.

Suppose that $\beta_i(m)=i_s i_{s+2r_i} \cdots i_{s+2kr_i-2r_i}$ ($k \in \mathbb{Z}_{\geq 1}$) is a $q_i$-string and $m' \in \varphi_i(m)$. If $i_{s+2kr_i-2r_i}$ is a factor of $m'$, then $\beta_i(m') = \beta_i(m)$ and hence $m A_{i, s+2jr_i-r_i}^{-1}$ ($j \in \{1,\ldots,k\}$) is not a monomial in $\chi_q(m_+)$.
\end{lemma}
For example, $1_0 2_7 2_9 A_{2, aq^8}^{-1} = 1_0 1_{9}$ is not in $\chi_q(1_0 2_7 2_9)$.

\subsection{Cluster algebras}
Cluster algebras are invented by Fomin and Zelevinsky in \cite{FZ02}. Let $\mathbb{Q}$ be the field of rational numbers and $\mathcal{F} =\mathbb{Q}(x_{1}, x_{2}, \cdots, x_{n})$ the field of rational functions. A seed in $\mathcal{F}$ is a pair $\Sigma=({\bf y}, Q)$, where ${\bf y} = (y_{1}, y_{2}, \cdots, y_{n})$ is a free generating set of $\mathcal{F}$, and $Q$ is a quiver with vertices labeled by $\{1, 2, \cdots, n\}$. Assume that $Q$ has neither loops nor $2$-cycles. For $k=1, 2, \cdots, n$, one defines a mutation $\mu_k$ by $\mu_k({\bf y}, Q) = ({\bf y}', Q')$. Here ${\bf y}' = (y_1', \ldots, y_n')$, $y_{i}'=y_{i}$, for $i\neq k$, and
\begin{equation}
y_{k}'=\frac{\prod_{i\rightarrow k} y_{i}+\prod_{k\rightarrow j} y_{j}}{y_{k}}, \label{exchange relation}
\end{equation}
where the first (resp. second) product in the right hand side is over all arrows of $Q$ with target (resp. source) $k$, and $Q'$ is obtained from $Q$ by
\begin{enumerate}
\item[(i)] adding a new arrow $i\rightarrow j$ for every existing pair of arrow $i\rightarrow k$ and $k\rightarrow j$;

\item[(ii)] reversing the orientation of every arrow with target or source equal to $k$;

\item[(iii)] erasing every pair of opposite arrows possible created by (i).
\end{enumerate}
The mutation class $\mathcal{C}(\Sigma)$ is the set of all seeds obtained from $\Sigma$ by a finite sequence of mutation $\mu_{k}$. If $\Sigma'=((y_{1}', y_{2}', \cdots, y_{n}'), Q')$ is a seed in $\mathcal{C}(\Sigma)$, then the subset $\{y_{1}', y_{2}', \cdots, y_{n}'\}$ is called a $cluster$, and its elements are called \textit{cluster variables}. The \textit{cluster algebra} $\mathscr{A}_{\Sigma}$ as the subring of $\mathcal{F}$ generated by all cluster variables. \textit{Cluster monomials} are monomials in the cluster variables supported on a single cluster.

In this paper, the initial seed in the cluster algebra we use is of the form $\Sigma=({\bf y}, Q)$, where ${\bf y}$ is an infinite set and $Q$ is an infinite quiver.

\begin{definition}[{Definition 3.1, \cite{GG14}}] \label{definition of cluster algebras of infinite rank}
Let $Q$ be a quiver without loops or $2$-cycles and with a countably infinite number of vertices labelled by all integers $i \in \mathbb{Z}$. Furthermore, for each vertex $i$ of $Q$ let the
number of arrows incident with $i$ be finite. Let ${\bf y} = \{y_i \mid i \in \mathbb{Z}\}$. An infinite initial seed is the pair $({\bf y}, Q)$. By finite sequences of mutation
at vertices of $Q$ and simultaneous mutation of the set ${\bf y}$ using the exchange relation (\ref{exchange relation}), one obtains a family of infinite seeds. The sets of variables in these seeds are called
the infinite clusters and their elements are called the cluster variables. The cluster algebra of
infinite rank of type $Q$ is the subalgebra of $\mathbb{Q}({\bf y})$ generated by the cluster variables.
\end{definition}

\section{The M-system of type $G_2$} \label{sec: first part of the M system of type G2}
In this section, we describe the M-system of type $G_2$.

\subsection{The M-system of type $G_2$}
We use $\mathcal T_{k, l}^{(s)}$ to denote the simple finite-dimensional $U_{q}\widehat{\mathfrak{g}}$-module with highest $l$-weight $T_{k, l}^{(s)}$. Here $T_{k, l}^{(s)}$ is defined in Section \ref{definition of minimal affinizations}. Let $[\mathcal{T}]$ be the equivalence class of the $U_q \widehat{\mathfrak{g}}$-module $\mathcal{T}$ in the Grothendieck ring $\rep( U_q \widehat{\mathfrak{g}} )$.

\begin{theorem}\label{first part of the M system of type G2}
For $s \in \mathbb{Z}$, we have the following system of equations:
\begin{eqnarray}
[\mathcal T_{k, l}^{(s)}][\mathcal T_{k, 0}^{(s+6)}]&=&[\mathcal T_{k+1, 0}^{(s)}][\mathcal T_{k-1, l}^{(s+6)}]+[\mathcal T_{0, 3k+l}^{(s)}] \quad (k \in \mathbb{Z}_{\geq 1}, l \in \{1, 2, 3\}), \label{eqn1}\\
{[\mathcal T_{k, l+3}^{(s)}]}[\mathcal T_{k, l}^{(s+6)}]&=&[\mathcal T_{k+1, l}^{(s)}][\mathcal T_{k-1, l+3}^{(s+6)}]+[\mathcal T_{0, 3k+l+3}^{(s)}][\mathcal T_{0, l}^{(s+6k+6)}] \quad (k, l \in \mathbb{Z}_{\geq 1}) \label{eqn2}.
\end{eqnarray}
Moreover, every module in the summands on the right hand side of the above equations corresponds to simple modules.
\end{theorem}
This is the first part of the M-system of type $G_2$. The equations in Theorem \ref{first part of the M system of type G2} will be proved in Section \ref{proof main1} and the simplicity of the modules in the summands on the right hand side of the equations in Theorem \ref{first part of the M system of type G2} will be proved in Section \ref{proof simplicity of the modules on the right hand side}.

\begin{theorem}\label{second part of M system}
For $s \in \mathbb{Z}$, we have the following system of equations:
\begin{eqnarray*}
[\widetilde{\mathcal T}_{k, l}^{(s)}][\widetilde{\mathcal T}_{k, 0}^{(s+6)}]&=&[\widetilde{\mathcal T}_{k+1, 0}^{(s)}][\widetilde{\mathcal T}_{k-1, l}^{(s+6)}]+[\widetilde{\mathcal T}_{0, 3k+l}^{(s)}] \quad (k \in \mathbb{Z}_{\geq 1}, l \in \{1, 2, 3\}), \\
{[\widetilde{\mathcal T}_{k, l+3}^{(s)}]}[\widetilde{\mathcal T}_{k, l}^{(s+6)}]&=&[\widetilde{\mathcal T}_{k+1, l}^{(s)}][\widetilde{\mathcal T}_{k-1, l+3}^{(s+6)}]+[\widetilde{\mathcal T}_{0, l}^{(s+6k+6)}][\widetilde{\mathcal T}_{0, 3k+l+3}^{(s)}] \quad (k, l\in \mathbb{Z}_{\geq 1}).
\end{eqnarray*}
Moreover, every module in the summands on the right hand side of the above equations corresponds to simple modules.
\end{theorem}
This is the second part of the M-system of type $G_2$. Theorem \ref{second part of M system} will be proved in Section \ref{sec: second part of M system}.

The M-system gives more efficient recursive procedure for computing the $q$-characters of minimal affinizations than the extended T-systems from \cite{LM13}.

The equations in Theorem \ref{first part of the M system of type G2} are equivalent to the following equations.
\begin{equation*}
\begin{split}
&\chi_q(\mathcal T_{k, l}^{(s)}) \chi_q(\mathcal T_{k, 0}^{(s+6)})=\chi_q(\mathcal T_{k+1, 0}^{(s)}) \chi_q(\mathcal T_{k-1, l}^{(s+6)})+\chi_q(\mathcal T_{0, 3k+l}^{(s)}) \quad (k \in \mathbb{Z}_{\geq 1}, l \in \{1, 2, 3\}), \\
&\chi_q(\mathcal T_{k, l+3}^{(s)} ) \chi_q( \mathcal T_{k, l}^{(s+6)})=\chi_q(\mathcal T_{k+1, l}^{(s)}) \chi_q(\mathcal T_{k-1, l+3}^{(s+6)})+\chi_q(\mathcal T_{0, 3k+l+3}^{(s)}) \chi_q(\mathcal T_{0, l}^{(s+6k+6)}) \quad (k, l \in \mathbb{Z}_{\geq 1}).
\end{split}
\end{equation*}

\begin{example}\label{eample1}
The following are some examples of equations in the M-system of type $G_2$.
\begin{eqnarray*}
\scalemath{0.65}{
\begin{split}
[1_{-7} 2_{0}][1_{-1}] & = [1_{-7} 1_{-1}][2_{0}]+[2_{-6} 2_{-4} 2_{-2} 2_{0}], \\
[1_{-9} 2_{-2} 2_{0}] [1_{-3}] & = [1_{-9} 1_{-3}][2_{-2} 2_{0}]+[2_{-8} 2_{-6} 2_{-4} 2_{-2} 2_{0}], \\
[1_{-11} 2_{-4} 2_{-2} 2_{0}] [1_{-5}] & = [1_{-11} 1_{-5}][2_{-4} 2_{-2} 2_{0}]+[2_{-10} 2_{-8} 2_{-6} 2_{-4} 2_{-2} 2_{0}], \\
[1_{-13} 2_{-6} 2_{-4} 2_{-2} 2_{0}] [1_{-7} 2_{0}] & = [1_{-13} 1_{-7} 2_{0}][2_{-6} 2_{-4} 2_{-2} 2_{0}]+[2_{0}][2_{-12} 2_{-10} \cdots 2_{-2} 2_{0}], \\
[1_{-33} 1_{-27} 2_{-20} \cdots 2_{-2} 2_{0}] [1_{-27} 1_{-21} 2_{-14} \cdots 2_{-2} 2_{0}] &= [1_{-33} 1_{-27} 1_{-21} 2_{-14} \cdots 2_{-2} 2_{0}][1_{-27} 2_{-20} \cdots 2_{-2} 2_{0}]+[2_{-14} 2_{-12} \cdots 2_{-2} 2_{0}] [2_{-32} 2_{-30} \cdots 2_{-2} 2_{0}].
\end{split}
}
\end{eqnarray*}
\end{example}

\begin{example}
The following are some examples of equations in the second part of the M-system of type $G_2$.
\begin{eqnarray*}
\scalemath{0.82}{
\begin{split}
[2_{0} 1_{7}] [1_{1}] & = [1_{1} 1_{7}][2_{0}]+[2_{0} 2_{2} 2_{4} 2_{6}], \\
[2_{0} 2_{2} 1_{9}] [1_{3}] & = [1_{3} 1_{9}][2_{0} 2_{2}]+[2_{0} 2_{2} 2_{4} 2_{6} 2_{8}], \\
[2_{0} 2_{2} 2_{4} 1_{11}] [1_{5}] & = [1_{5} 1_{11}][2_{0} 2_{2} 2_{4}]+[2_{0} 2_{2} 2_{4} 2_{6} 2_{8} 2_{10}], \\
[2_{0} 2_{2} 2_{4} 2_{6} 1_{13}] [2_{0} 1_{7}] & = [2_{0} 1_{7} 1_{13}][2_{0} 2_{2} 2_{4} 2_{6}]+[2_{0}][2_{0} 2_{2} \cdots 2_{10} 2_{12}], \\
[2_{0} 2_{2} \cdots 2_{20} 1_{27} 1_{33}] [2_{0} 2_{2} \cdots 2_{14} 1_{21} 1_{27}] & = [2_{0} 2_{2} \cdots 2_{14} 1_{21} 1_{27} 1_{33}][2_{0} 2_{2} \cdots 2_{20} 1_{27}]+[2_{0} 2_{2} \cdots 2_{12} 2_{14}][2_{0} 2_{2} \cdots 2_{30} 2_{32}].
\end{split}
}
\end{eqnarray*}
\end{example}

\subsection{The $m$-system of type $G_2$}
For $k, l \in \mathbb{Z}_{\geq 0}$, let $m_{k,l} = \res(\mathcal{T}_{k,l}^{(0)})$ (resp. $\widetilde{m}{_{k,l}} = \res(\widetilde{\mathcal{T}}{_{k,l}^{(0)}})$) be the restriction of $\mathcal{T}_{k,l}^{(0)}$ (resp. $\widetilde{\mathcal{T}}{_{k,l}^{(0)}}$) to $U_q \mathfrak{g}$. Let $\chi(M)$ be the character of a $U_q \mathfrak{g}$-module $M$. By Theorem \ref{first part of the M system of type G2}, we have the following result.
\begin{corollary}\label{m system}
We have
\begin{eqnarray*}
\chi(m_{k, l}) \chi( m_{k, 0}) &=& \chi(m_{k+1, 0}) \chi( m_{k-1, l}) + \chi( m_{0, 3k+l}) \quad (k \in \mathbb{Z}_{\geq 1}, l \in \{1, 2, 3\}), \\
\chi( m_{k, l+3} ) \chi( m_{k, l} ) &=&  \chi( m_{k+1, l} ) \chi( m_{k-1, l+3} ) + \chi( m_{0, l} ) \chi( m_{0, 3k+l+3} ) \quad (k, l \in \mathbb{Z}_{\geq 1}), \\
\chi(\widetilde{m}{_{k, l}}) \chi( \widetilde{m}{_{k, 0}}) &=& \chi(\widetilde{m}{_{k+1, 0}}) \chi( \widetilde{m}{_{k-1, l}}) + \chi( \widetilde{m}{_{0, 3k+l}}) \quad (k \in \mathbb{Z}_{\geq 1}, l \in \{1, 2, 3\}), \\
\chi( \widetilde{m}{_{k, l+3} }) \chi(\widetilde{ m}{_{k, l}} ) &=&  \chi( \widetilde{m}{_{k+1, l}} ) \chi( \widetilde{m}{_{k-1, l+3}} ) + \chi( \widetilde{m}{_{0, l}} ) \chi( \widetilde{m}{_{0, 3k+l+3}} ) \quad (k, l \in \mathbb{Z}_{\geq 1}).
\end{eqnarray*}
\end{corollary}

We call the above system of equations the $m$-system of type $G_2$.

\section{Interpretation of the equations in the M-system of type $G_2$ as exchange relations} \label{interpret M systems as exchange relations}
In this section, we interpret the equations in the M-system of type $G_2$ as exchange relations in certain cluster algebra constructed by Hernandez and Leclerc in \cite{HL16}.

\subsection{The cluster algebra $\mathscr{A}$ constructed by Hernandez and Leclerc in \cite{HL16}} \label{definition of cluster algebra A}
Let $S=\{-2n+1\mid n\in \mathbb{Z}_{\geq 1}\}$, $S'=\{-2n+2 \mid n\in \mathbb{Z}_{\geq 1}\}$, and $V=(\{1\}\times S)\bigcup (\{2\}\times S')$. Let $Q$ be a quiver with the vertex set ${V}$. The arrows of $Q$ are given by the following rules. For $s_1, s_{2} \in S, s'_{1}, s'{_2} \in S'$, there is an arrow from $(1, s_1)$ to $(1, s_2)$ if and only if $s_2 = s_1 +6$, there is an arrow from $(2, s'_1)$ to $(2, s'_2 )$ if and only if $s'_2 = s'_1 + 2 $, there is an arrow from $(1, s_1)$ to $(2, s'_1)$ if and only if $s'_1 = s_1 -5$, and there is an arrow from $(2, s'_2)$ to $(1, s_2 )$ if and only if $s_2 = s'_2 - 1 $. The quiver $Q$ is the quiver $G^-$ of type $G_2$ in \cite{HL16}.

Let ${\bf t} =\{t_{k, 0}^{(s_1)}, t_{0, l}^{(s_2)} \mid s_1, s_2 \in S, k, l\in \mathbb{Z}_{\geq 1}\}$. Let $\mathscr{A}$ be the cluster algebra defined by the initial seed $({\bf t}, Q)$. By Definition \ref{definition of cluster algebras of infinite rank}, $\mathscr{A}$ is the $\mathbb{Q}$-subalgebra of the field of rational functions $\mathbb{Q}({\bf t})$ generated by all the elements obtained from some elements of $\bf t$ via a finite sequence of seed mutations.

\subsection{Interpretation of the first part of the M-system of type $G_2$ as exchange relations}
We use $``C_{1}"$ to denote the column of vertices $(1, -1)$, $(1, -7)$, $\ldots$, $(1, -6n+5)$, $\cdots$ in the quiver $Q$. We use $``C_{2}"$ to denote the column of vertices $(1, -3)$, $(1, -9)$, $\ldots$, $(1, -6n+3)$, $\cdots$ in $Q$. We use $``C_{3}"$ to denote the column of vertices $(1, -5)$, $(1, -11)$, $\ldots$, $(1, -6n+1)$, $\cdots$ in $Q$. We use $``C_{4}"$ to denote the column of vertices $(2, 0)$, $(2, -2)$, $\ldots$, $(1, -2n+2)$, $\cdots$ in $Q$.

By saying that mutate at the column $C_i$, $i \in \{1, 2, 3, 4\}$, we mean that we mutate the vertices of $C_i$ as follows. First we mutate at the first vertex in the column $C_i$, then the second vertex, an so on until the vertex at infinity. By saying that we mutate $C_{i_1}, C_{i_2}, \ldots$, where $i_j \in \{1, 2, 3, 4\}, j=1,2,\ldots, n$, we mean that we first mutate the column $C_{i_1}$, then the column $C_{i_2}$, an so on.

The variables $t_{k,0}^{(s_1)}$, $t_{0,l}^{(s_2)}$, $s_1, s_2 \in S$, are the cluster variables in the initial seed of $\mathscr{A}$ defined in Section \ref{definition of cluster algebra A}. For convenience, we write $t_{\lceil -s_1/6 \rceil, 0}^{(s_1)}$ at the vertex $(1,s_1)$ and write $t_{0, (-s_2+1)/2}^{(s_2)}$ at the vertex $(2,s_2)$ in the initial quiver $Q$, $s_1 , s_2 \in S$. Then we obtain the quiver (a) in Figure \ref{mutation sequence c1 c1 c1}.

We define some variables $t_{k,l}^{(s)}$ ( $k, l \in \mathbb{Z}_{\geq 1}$, $s \in S$ ) recursively as follows. Let $\text{Seq}_i$, $i=1,2,3$, be the mutation sequence $C_i, C_i, C_i, \ldots$.

We define
\begin{gather} \label{variables 1}
\begin{split}
& t_{k, l}^{(s)} = (t_{k, 0}^{(s+6)})', \quad k \in \mathbb{Z}_{\geq 1}, l \in \{1, 2, 3\}, s \equiv 2l+3 \pmod 6, \\
& t_{k, l+3}^{(s)} = (t_{k, l}^{(s+6)})', \quad k, l \in \mathbb{Z}_{\geq 1},
\end{split}
\end{gather}
where
\begin{gather} \label{exchange relations 1}
\begin{split}
& (t_{k, 0}^{(s+6)})' = \frac{t_{k+1, 0}^{(s)}t_{k-1, l}^{(s+6)}+t_{0, 3k+l}^{(s)} }{t_{k, 0}^{(s+6)}}, \quad k \in \mathbb{Z}_{\geq 1}, l \in \{1, 2, 3\}, \\
& (t_{k, l}^{(s+6)})' = \frac{t_{k+1, l}^{(s)}t_{k-1, l+3}^{(s+6)}+t_{0, 3k+l+3}^{(s)} t_{0, l}^{(s+6k+6)}}{t_{k, l}^{(s+6)}}, \quad k, l \in \mathbb{Z}_{\geq 1}.
\end{split}
\end{gather}
are exchange relations which occur when we mutate $\text{Seq}_i$, $i \in \{1,2,3\}$. The variables (\ref{variables 1}) are defined in the order according to the mutation sequence $\text{Seq}_i$. In this order, every variable in (\ref{variables 1}) is defined by an equation of (\ref{exchange relations 1}) using variables in ${\bf t}$ and those variables in (\ref{variables 1}) which are already defined.

Figure \ref{mutation sequence c1 c1 c1} is the first few mutations in the mutation sequence $\text{Seq}_1$.

The exchange relations in (\ref{exchange relations 1}) coincides with the equations in the first part of the M-system of type $G_2$ in Theorem \ref{first part of the M system of type G2}. Therefore the equations in the first part of the M-system of type $G_2$ can be interpreted as exchange relations in the cluster algebra $\mathscr{A}$. The cluster variables $t_{k,l}^{(s)}$ corresponds to the minimal affinizations $\mathcal{T}_{k,l}^{(s)}$, $k, l \in \mathbb{Z}_{\geq 0}$.

Using the mutation sequence $\text{Seq}_i$, $i \in \{1,2,3\}$, we obtain minimal affinizations
\begin{align*}
\mathcal{T}_{k, l}^{(-6k-2l+1 )}, \ k, l \in \mathbb{Z}_{\geq 1}, \ l \equiv i \pmod 3.
\end{align*}

\subsection{Interpretation of the second part of the M-system of type $G_2$ as exchange relations}
We can also interpret the second part of the M-system of type $G_2$ as exchange relations in the cluster algebra $\mathscr{A}$ defined in Section \ref{definition of cluster algebra A}. Let $\text{Seq}_i$, $i=1,2,3$, be the mutation sequence $C_i, C_i, C_i, \ldots$. The cluster variables $t_{k,l}^{(s)}$ corresponds to the minimal affinizations $\widetilde{\mathcal{T}}_{k,l}^{(s)}$, $k, l \in \mathbb{Z}_{\geq 0}$. Using the mutation sequence $\text{Seq}_i$, $i \in \{1,2,3\}$, we obtain minimal affinizations
\begin{align*}
\widetilde{\mathcal{T}}_{k, l}^{(-6k-2l+1 )}, \ k, l \in \mathbb{Z}_{\geq 1}, \ l \equiv i \pmod 3.
\end{align*}

\begin{figure}
\begin{minipage}[t]{.25\linewidth}
\begin{xy}
(13,90) *+{t_{0, 1}^{(-1)}} ="4";%
(13,80) *+{t_{0, 2}^{(-3)}} ="5";(26,80) *+{t_{1, 0}^{(-1)}} ="13";%
(0,70) *+{t_{1, 0}^{(-3)}} ="1";(13,70) *+{t_{0, 3}^{(-5)}} ="6";%
(13,60) *+{t_{0, 4}^{(-7)}} ="7";(39,60) *+{t_{1, 0}^{(-5)}} ="16";%
(13,50) *+{t_{0, 5}^{(-9)}} ="8";(26,50) *+{t_{2, 0}^{(-7)}} ="14";%
(0,40) *+{t_{2, 0}^{(-9)}} ="2";(13,40) *+{t_{0, 6}^{(-11)}} ="9";%
(13,30) *+{t_{0, 7}^{(-13)}} ="10";(39,30) *+{t_{2, 0}^{(-11)}} ="17";%
(13,20) *+{t_{0, 8}^{(-15)}} ="11";(26,20) *+{t_{3, 0}^{(-13)}} ="15";%
(0,10) *+{t_{3, 0}^{(-15)}} ="3";(13,10) *+{t_{0, 9}^{(-17)}} ="12";%
(0,0) *+{\vdots} ="18";(13,0) *+{\vdots} ="19";(20,-5) *+{(\text{a})} ="22";(26,0) *+{\vdots} ="20";(39,0) *+{\vdots} ="21";
{\ar "5";"4"};{\ar "6";"5"};{\ar "7";"6"};{\ar "8";"7"};{\ar "9";"8"};{\ar "10";"9"};{\ar "11";"10"};{\ar "12";"11"};{\ar "19";"12"};%
{\ar "2";"1"};{\ar "3";"2"};{\ar "18";"3"};%
{\ar "14";"13"};{\ar "15";"14"};{\ar "20";"15"};%
{\ar "17";"16"};{\ar "21";"17"};%
{\ar "5";"1"};{\ar "1";"8"};{\ar "8";"2"};{\ar "2";"11"};%
{\ar "4";"13"};{\ar "13";"7"};{\ar "7";"14"};{\ar "14";"10"};{\ar "10";"15"};%
{\ar "6";"16"};{\ar "16";"9"};{\ar "9";"17"};{\ar "17";"12"};
\end{xy}
\end{minipage}
\begin{minipage}[t]{.25\linewidth}
\begin{xy}
(13,90) *+{t_{0, 1}^{(-1)}} ="4";%
(13,80) *+{t_{0, 2}^{(-3)}} ="5";(26,80) *+{\fbox {$t_{1, 1}^{(-7)}$}} ="13";%
(0,70) *+{t_{1, 0}^{(-3)}} ="1";(13,70) *+{t_{0, 3}^{(-5)}} ="6";%
(13,60) *+{t_{0, 4}^{(-7)}} ="7";(39,60) *+{t_{1, 0}^{(-5)}} ="16";%
(13,50) *+{t_{0, 5}^{(-9)}} ="8";(26,50) *+{t_{2, 0}^{(-7)}} ="14";%
(0,40) *+{t_{2, 0}^{(-9)}} ="2";(13,40) *+{t_{0, 6}^{(-11)}} ="9";%
(13,30) *+{t_{0, 7}^{(-13)}} ="10";(39,30) *+{t_{2, 0}^{(-11)}} ="17";%
(13,20) *+{t_{0, 8}^{(-15)}} ="11";(26,20) *+{t_{3, 0}^{(-13)}} ="15";%
(0,10) *+{t_{3, 0}^{(-15)}} ="3";(13,10) *+{t_{0, 9}^{(-17)}} ="12";%
(0,0) *+{\vdots} ="18";(13,0) *+{\vdots} ="19";(20,-5) *+{(\text{b})} ="22";(26,0) *+{\vdots} ="20";(39,0) *+{\vdots} ="21";
{\ar "5";"4"};{\ar "6";"5"};{\ar "7";"6"};{\ar "8";"7"};{\ar "9";"8"};{\ar "10";"9"};{\ar "11";"10"};{\ar "12";"11"};{\ar "19";"12"};%
{\ar "2";"1"};{\ar "3";"2"};{\ar "18";"3"};%
{\ar "13";"14"};{\ar "15";"14"};{\ar "20";"15"};%
{\ar "17";"16"};{\ar "21";"17"};%
{\ar "5";"1"};{\ar "1";"8"};{\ar "8";"2"};{\ar "2";"11"};%
{\ar "13";"4"};{\ar "7";"13"};{\ar "14";"10"};{\ar "10";"15"};%
{\ar "6";"16"};{\ar "16";"9"};{\ar "9";"17"};{\ar "17";"12"};%
{\ar@/_2pc/"4";"7"};
\end{xy}
\end{minipage}
\begin{minipage}[t]{.25\linewidth}
\begin{xy}
(13,90) *+{t_{0, 1}^{(-1)}} ="4";%
(13,80) *+{t_{0, 2}^{(-3)}} ="5";(26,80) *+{t_{1, 1}^{(-7)}} ="13";%
(0,70) *+{t_{1, 0}^{(-3)}} ="1";(13,70) *+{t_{0, 3}^{(-5)}} ="6";%
(13,60) *+{t_{0, 4}^{(-7)}} ="7";(39,60) *+{t_{1, 0}^{(-5)}} ="16";%
(13,50) *+{t_{0, 5}^{(-9)}} ="8";(26,50) *+{\fbox {$t_{2, 1}^{(-13)}$}} ="14";%
(0,40) *+{t_{2, 0}^{(-9)}} ="2";(13,40) *+{t_{0, 6}^{(-11)}} ="9";%
(13,30) *+{t_{0, 7}^{(-13)}} ="10";(39,30) *+{t_{2, 0}^{(-11)}} ="17";%
(13,20) *+{t_{0, 8}^{(-15)}} ="11";(26,20) *+{t_{3, 0}^{(-13)}} ="15";%
(0,10) *+{t_{3, 0}^{(-15)}} ="3";(13,10) *+{t_{0, 9}^{(-17)}} ="12";%
(0,0) *+{\vdots} ="18";(13,0) *+{\vdots} ="19";(20,-5) *+{(\text{c})} ="22";(26,0) *+{\vdots} ="20";(39,0) *+{\vdots} ="21";
{\ar "5";"4"};{\ar "6";"5"};{\ar "7";"6"};{\ar "8";"7"};{\ar "9";"8"};{\ar "10";"9"};{\ar "11";"10"};{\ar "12";"11"};{\ar "19";"12"};%
{\ar "2";"1"};{\ar "3";"2"};{\ar "18";"3"};%
{\ar "14";"13"};{\ar "14";"15"};{\ar "20";"15"};%
{\ar "17";"16"};{\ar "21";"17"};%
{\ar "5";"1"};{\ar "1";"8"};{\ar "8";"2"};{\ar "2";"11"};%
{\ar "13";"4"};{\ar "13";"10"};{\ar "7";"13"};{\ar "10";"14"};%
{\ar "6";"16"};{\ar "16";"9"};{\ar "9";"17"};{\ar "17";"12"};%
{\ar@/_2pc/"4";"7"};
\end{xy}
\end{minipage}

\begin{minipage}[t]{.10\linewidth}
\begin{xy}
(11,90) *+{t_{0, 1}^{(-1)}} ="4";%
(11,80) *+{t_{0, 2}^{(-3)}} ="5";(22,80) *+{t_{1, 1}^{(-7)}} ="13";%
(0,70) *+{t_{1, 0}^{(-3)}} ="1";(11,70) *+{t_{0, 3}^{(-5)}} ="6";%
(11,60) *+{t_{0, 4}^{(-7)}} ="7";(33,60) *+{t_{1, 0}^{(-5)}} ="16";%
(11,50) *+{t_{0, 5}^{(-9)}} ="8";(22,50) *+{t_{2, 1}^{(-13)}} ="14";%
(0,40) *+{t_{2, 0}^{(-9)}} ="2";(11,40) *+{t_{0, 6}^{(-11)}} ="9";%
(11,30) *+{t_{0, 7}^{(-13)}} ="10";(33,30) *+{t_{2, 0}^{(-11)}} ="17";%
(11,20) *+{t_{0, 8}^{(-15)}} ="11";(22,20) *+{\fbox {$t_{3, 1}^{(-19)}$}} ="15";%
(0,10) *+{t_{3, 0}^{(-15)}} ="3";(11,10) *+{t_{0, 9}^{(-17)}} ="12";%
(0,0) *+{\vdots} ="18";(11,0) *+{\vdots} ="19";(15,-5) *+{(\text{d})} ="22";(22,0) *+{\vdots} ="20";(33,0) *+{\vdots} ="21";
{\ar "5";"4"};{\ar "6";"5"};{\ar "7";"6"};{\ar "8";"7"};{\ar "9";"8"};{\ar "10";"9"};{\ar "11";"10"};{\ar "12";"11"};{\ar "19";"12"};%
{\ar "2";"1"};{\ar "3";"2"};{\ar "18";"3"};%
{\ar "14";"13"};{\ar "14";"15"};{\ar "20";"15"};%
{\ar "17";"16"};{\ar "21";"17"};%
{\ar "5";"1"};{\ar "1";"8"};{\ar "8";"2"};{\ar "2";"11"};%
{\ar "13";"4"};{\ar "13";"10"};{\ar "7";"13"};{\ar "10";"14"};%
{\ar "6";"16"};{\ar "16";"9"};{\ar "9";"17"};{\ar "17";"12"};%
{\ar@/_2pc/"4";"7"};
\end{xy}
\end{minipage}
\begin{minipage}[t]{.10\linewidth}
\begin{xy}
(2,60) *+{\cdots} ="4";
\end{xy}
\end{minipage}
\begin{minipage}[t]{.10\linewidth}
\begin{xy}
(11,92) *+{t_{0, 1}^{(-1)}} ="4";%
(11,80) *+{t_{0, 2}^{(-3)}} ="5";(22,80) *+{\fbox {$t_{1, 4}^{(-13)}$}} ="13";%
(0,70) *+{t_{1, 0}^{(-3)}} ="1";(11,70) *+{t_{0, 3}^{(-5)}} ="6";%
(11,60) *+{t_{0, 4}^{(-7)}} ="7";(33,60) *+{t_{1, 0}^{(-5)}} ="16";%
(11,50) *+{t_{0, 5}^{(-9)}} ="8";(22,50) *+{t_{2, 1}^{(-13)}} ="14";%
(0,40) *+{t_{2, 0}^{(-9)}} ="2";(11,40) *+{t_{0, 6}^{(-11)}} ="9";%
(11,30) *+{t_{0, 7}^{(-13)}} ="10";(33,30) *+{t_{2, 0}^{(-11)}} ="17";%
(11,20) *+{t_{0, 8}^{(-15)}} ="11";(22,20) *+{t_{3, 1}^{(-19)}} ="15";%
(0,10) *+{t_{3, 0}^{(-15)}} ="3";(11,10) *+{t_{0, 9}^{(-17)}} ="12";%
(0,0) *+{\vdots} ="18";(11,0) *+{\vdots} ="19";(15,-5) *+{(\text{e})} ="22";(22,0) *+{\vdots} ="20";(33,0) *+{\vdots} ="21";
{\ar "5";"4"};{\ar "6";"5"};{\ar "7";"6"};{\ar "8";"7"};{\ar "9";"8"};{\ar "10";"9"};{\ar "11";"10"};{\ar "12";"11"};{\ar "19";"12"};%
{\ar "2";"1"};{\ar "3";"2"};{\ar "18";"3"};%
{\ar "13";"14"};{\ar "14";"15"};{\ar "20";"15"};%
{\ar "17";"16"};{\ar "21";"17"};%
{\ar "5";"1"};{\ar "1";"8"};{\ar "8";"2"};{\ar "2";"11"};%
{\ar "4";"13"};{\ar "10";"13"};{\ar "13";"7"};{\ar "10";"14"};%
{\ar "6";"16"};{\ar "16";"9"};{\ar "9";"17"};{\ar "17";"12"};%
{\ar@/_2pc/"7";"10"};
\end{xy}
\end{minipage}
\begin{minipage}[t]{.10\linewidth}
\begin{xy}
(11,90) *+{t_{0, 1}^{(-1)}} ="4";%
(11,80) *+{t_{0, 2}^{(-3)}} ="5";(22,80) *+{t_{1, 4}^{(-13)}} ="13";%
(0,70) *+{t_{1, 0}^{(-3)}} ="1";(11,70) *+{t_{0, 3}^{(-5)}} ="6";%
(11,60) *+{t_{0, 4}^{(-7)}} ="7";(33,60) *+{t_{1, 0}^{(-5)}} ="16";%
(11,50) *+{t_{0, 5}^{(-9)}} ="8";(22,50) *+{\fbox {$t_{2, 4}^{(-19)}$}} ="14";%
(0,40) *+{t_{2, 0}^{(-9)}} ="2";(11,40) *+{t_{0, 6}^{(-11)}} ="9";%
(11,30) *+{t_{0, 7}^{(-13)}} ="10";(33,30) *+{t_{2, 0}^{(-11)}} ="17";%
(11,20) *+{t_{0, 8}^{(-15)}} ="11";(22,20) *+{t_{3, 1}^{(-19)}} ="15";%
(0,10) *+{t_{3, 0}^{(-15)}} ="3";(11,10) *+{t_{0, 9}^{(-17)}} ="12";%
(0,0) *+{\vdots} ="18";(11,0) *+{\vdots} ="19";(15,-5) *+{(\text{f})} ="22";(22,0) *+{\vdots} ="20";(33,0) *+{\vdots} ="21";
{\ar "5";"4"};{\ar "6";"5"};{\ar "7";"6"};{\ar "8";"7"};{\ar "9";"8"};{\ar "10";"9"};{\ar "11";"10"};{\ar "12";"11"};{\ar "19";"12"};%
{\ar "2";"1"};{\ar "3";"2"};{\ar "18";"3"};%
{\ar "13";"14"};{\ar "14";"15"};{\ar "20";"15"};%
{\ar "17";"16"};{\ar "21";"17"};%
{\ar "5";"1"};{\ar "1";"8"};{\ar "8";"2"};{\ar "2";"11"};%
{\ar "4";"13"};{\ar "10";"13"};{\ar "13";"7"};{\ar "10";"14"};%
{\ar "6";"16"};{\ar "16";"9"};{\ar "9";"17"};{\ar "17";"12"};%
{\ar@/_2pc/"7";"10"};
\end{xy}
\end{minipage}
\begin{minipage}[t]{.10\linewidth}
\begin{xy}
(2,60) *+{\cdots} ="4";
\end{xy}
\end{minipage}
\caption{The mutation sequence $C_1, C_1, C_1, \ldots$}\label{mutation sequence c1 c1 c1}
\end{figure}
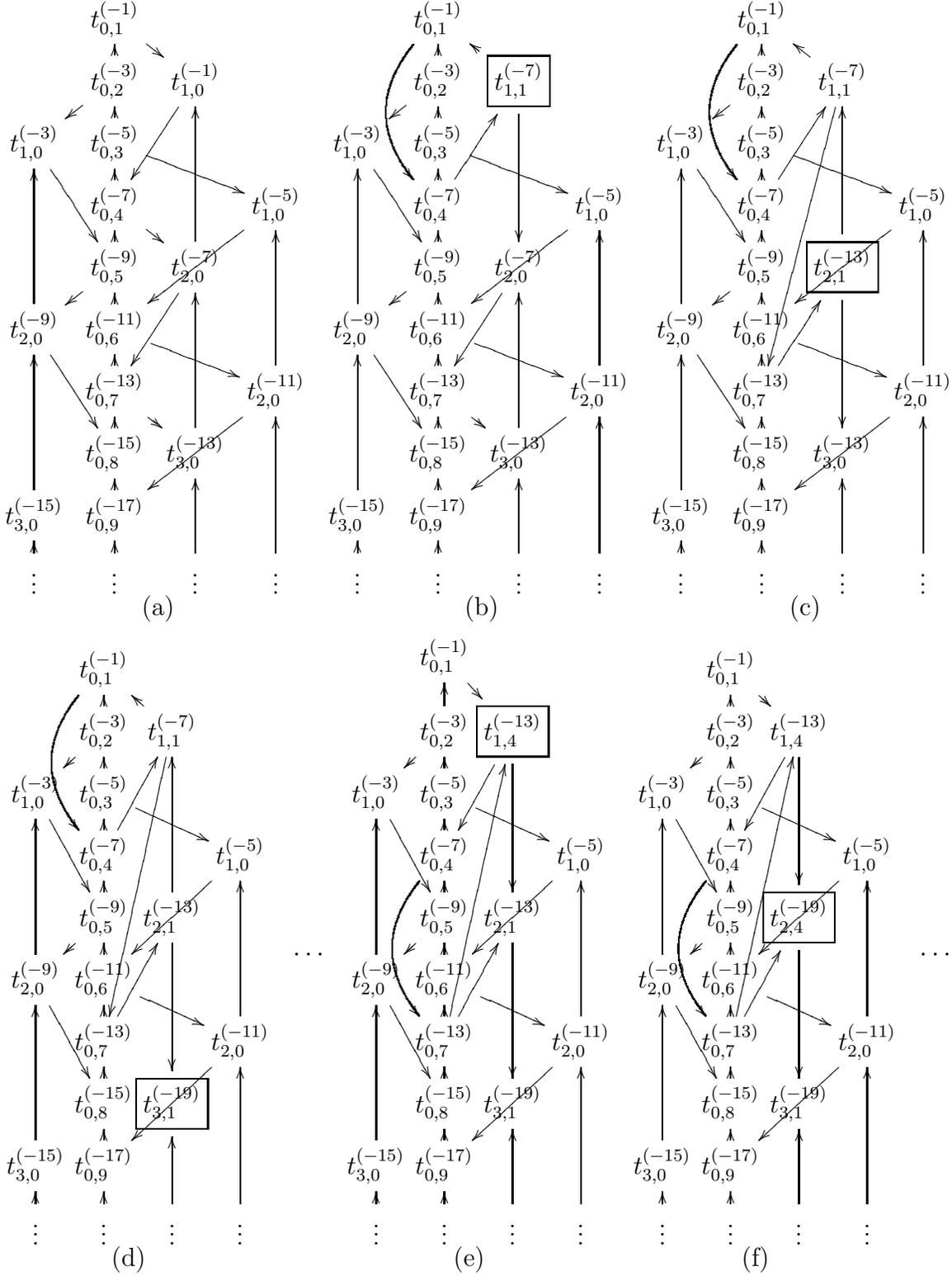

\section{Proof of the equations in Theorem $\ref{first part of the M system of type G2}$} \label{proof main1}
In this section, we prove the equations in Theorem $\ref{first part of the M system of type G2}$.

Using the Frenkel-Mukhin algorithm, one can easily compute the $q$-characters of the fundamental modules.
\begin{lemma}\label{q characters of fundamental modules}
The fundamental q-characters for $U_{q}\widehat{\mathfrak{g}}$ of type $G_{2}$ are given by
\begin{eqnarray*}
\chi_{q}(1_{0})&=&1_{0}+2_{1} 2_{3} 2_{5} 1_{6}^{-1}+2_{1} 2_{3} 2_{7}^{-1}+2_{1} 2_{5}^{-1} 2_{7}^{-1} 1_{4}+2_{3}^{-1} 2_{5}^{-1} 2_{7}^{-1}
1_{2} 1_{4}\\
&&+2_{1} 2_{9} 1_{10}^{-1}+1_{4} 1_{8}^{-1}+2_{3}^{-1} 2_{9} 1_{2} 1_{10}^{-1}+2_{5} 2_{7} 2_{9} 1_{8}^{-1} 1_{10}^{-1}+2_{1} 2_{11}^{-1}\\
&&+2_{3}^{-1} 2_{11}^{-1} 1_{2}+2_{5} 2_{7} 2_{11}^{-1} 1_{8}^{-1}+2_{5} 2_{9}^{-1} 2_{11}^{-1}+2_{7}^{-1} 2_{9}^{-1} 2_{11}^{-1} 1_{6}+1_{12}^{-1}, \\
\chi_{q}(2_{0})&=&2_{0}+2_{2}^{-1} 1_{1}+2_{4} 2_{6} 1_{7}^{-1}+2_{4} 2_{8}^{-1}+2_{6}^{-1} 2_{8}^{-1} 1_{5}+2_{10} 1_{11}^{-1}+2_{12}^{-1}.
\end{eqnarray*}
\end{lemma}

\subsection{Classification of dominant monomials in the summands on both sides of the M-system}
By {Theorem 3.8 in \cite{H07}} (see also {Theorem 3.3 in \cite{LM13}}), the modules $\mathcal T{_{k, l}^{(s)}}$ ($s \in \mathbb{Z}, k, l \in \mathbb{Z}_{\geq 0}$) are special. Therefore we can use the Frenkel-Mukhin algorithm to compute the $q$-characters of $\mathcal T{_{k, l}^{(s)}}$ ($s \in \mathbb{Z}, k, l \in \mathbb{Z}_{\geq 0}$).
Now we use the Frenkel-Mukhin algorithm to classify dominant monomials in the summands on both sides of the M-system.
\begin{lemma}\label{lemma1}
We have the following cases.
\begin{enumerate}[(1)]
\item Let
\begin{align*}
M =T_{k, l}^{(s)} T_{k, 0}^{(s+6)} \ (k\in \mathbb{Z}_{\geq 1}, l \in \{1, 2, 3\}).
\end{align*}
Then the dominant monomials in $\chi_q(T_{k, l}^{(s)}) \chi_q( T_{k, 0}^{(s+6)} )$ $(k\in \mathbb{Z}_{\geq 1}, l \in \{1, 2, 3\})$ are $M$ and
\begin{equation*}
\begin{split}
&M_{i} =M\prod_{j=0}^{i-1}A_{1, s+6k-6j-3}^{-1}, \quad i=1, 2, \ldots, k,
\end{split}
\end{equation*}
with multiplicity $1$.

The dominant monomials in $\chi_q(T_{k+1, 0}^{(s)}) \chi_q(T_{k-1, l}^{(s+6)})$ $(k\in \mathbb{Z}_{\geq 1}, l \in \{1, 2, 3\})$ are $M$, $M_{1}$, $\ldots$, $M_{k-1}$, with multiplicity $1$.

The only dominant monomial in $T_{0, 3k+l}^{(s)}$ $(k\in \mathbb{Z}_{\geq 1}, l \in \{1, 2, 3\})$ is $M_{k}$ with multiplicity $1$.

\item Let
\begin{align*}
M=T_{k, l+3}^{(s)} T_{k, l}^{(s+6)} \ (k, l \in \mathbb{Z}_{\geq 1}).
\end{align*}
Then the dominant monomials in $\chi_q(T_{k, l+3}^{(s)})\chi_q(T _{k, l}^{(s+6)})$ $(k, l \in \mathbb{Z}_{\geq 1})$ are $M$ and
\begin{equation*}
\begin{split}
M_{i} =M\prod_{j=0}^{i-1}A_{1, s+6k-6j-3}^{-1}, \quad i=1, 2, \ldots, k,
\end{split}
\end{equation*}
with multiplicity $1$.

The dominant monomials in $\chi_q(T_{k+1, l}^{(s)}) \chi_q(T_{k-1, l+3}^{(s+6)})$ $(k, l \in \mathbb{Z}_{\geq 1})$ are $M$, $M_{1}$, $\ldots$, $M_{k-1}$, with multiplicity $1$.

The only dominant monomial in $\chi_q(T_{0, 3k+l+3}^{(s)}) \chi_q(T_{0, l}^{(s+6k+6)})$ $(k, l \in \mathbb{Z}_{\geq 1})$ is $M_{k}$ with multiplicity $1$.
\end{enumerate}
\end{lemma}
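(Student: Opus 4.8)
The plan is to prove both parts by the same strategy; I describe part (1), part (2) being entirely parallel (with the chain of $A_{1,\bullet}^{-1}$ now anchored at the spectral parameter $aq^{s+6k-3}$). Two facts set it up. First, as recalled just above, each $\mathcal{T}_{k,l}^{(s)}$ is special, so its $q$-character is computed by the Frenkel--Mukhin algorithm, every monomial of $\chi_q(\mathcal{T}_{k,l}^{(s)})$ other than the highest weight monomial $T_{k,l}^{(s)}$ is right-negative, and $\mathscr{M}(\mathcal{T}_{k,l}^{(s)}) \subset T_{k,l}^{(s)}\mathcal{Q}^-$. Second, since $[\mathcal{T}_{k,l}^{(s)}][\mathcal{T}_{k,0}^{(s+6)}]$ is the class of the tensor product, every monomial of $\chi_q(T_{k,l}^{(s)})\chi_q(T_{k,0}^{(s+6)})$ is a product $m_1 m_2$ with $m_1 \in \mathscr{M}(\mathcal{T}_{k,l}^{(s)})$ and $m_2 \in \mathscr{M}(\mathcal{T}_{k,0}^{(s+6)})$.

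I would first dispose of existence. A direct computation with the explicit formulas for $A_{1,a}, A_{2,a}$ and for the highest weight monomials shows that $M$ and each $M_r$ ($r = 1, \dots, k$) are dominant, and that $M_k$ is, up to a spectral shift, the highest weight monomial of $\mathcal{T}_{0,3k+l}^{(s)}$; since the latter module is special, $M_k$ is indeed its only dominant monomial. To see that each $M_r$ occurs in the product $\chi_q(T_{k,l}^{(s)})\chi_q(T_{k,0}^{(s+6)})$, I would write $M_r = m_1^{(r)}\,T_{k,0}^{(s+6)}$ and check that $m_1^{(r)} := M_r (T_{k,0}^{(s+6)})^{-1}$ lies in $\mathscr{M}(\mathcal{T}_{k,l}^{(s)})$: these $m_1^{(r)}$ are the $1$-dominant monomials that the Frenkel--Mukhin recursion at the node $1$ is forced to produce from $T_{k,l}^{(s)}$ (equivalently, they are visible in $\beta_1(\chi_q(\mathcal{T}_{k,l}^{(s)}))$ using the complete description of $U_{q_1}\widehat{\mathfrak{sl}_2}$-$q$-characters recalled in Section 2).

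The substantive part is completeness. Let $M' = m_1 m_2$ be dominant with $M' \neq M$; then at least one of $m_1, m_2$ differs from the corresponding highest weight monomial and is therefore right-negative, so $M'$ is right-negative, and dominance of $M'$ forces the right-negative part of one factor to be cancelled by positive powers of $Y_{i,\bullet}$ coming from the other. Since a product of right-negative monomials is right-negative, this cancellation must be one-sided: I would show that necessarily $m_2 = T_{k,0}^{(s+6)}$ and $m_1$ is obtained from $T_{k,l}^{(s)}$ by multiplying only by factors $A_{1,\bullet}^{-1}$. The proof of this reduction is where the explicit data enters: using the shapes of $A_{1,a}$ and $A_{2,a}$ together with the positions of the variables in $T_{k,l}^{(s)}$ and $T_{k,0}^{(s+6)}$, one shows that any $A_{2,\bullet}^{-1}$ occurring in $m_1$, or any lowering of $m_2$ off $T_{k,0}^{(s+6)}$, deposits a right-negative variable at a spectral position that the other factor cannot restore to a positive power. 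Once $m_2 = T_{k,0}^{(s+6)}$ is known, $M' = m_1 T_{k,0}^{(s+6)}$ is dominant precisely when $m_1$ is $2$-dominant and is $1$-dominant away from the spectral window covered by $T_{k,0}^{(s+6)}$; a Frenkel--Mukhin bookkeeping on $\chi_q(\mathcal{T}_{k,l}^{(s)})$, or an induction on $k$ peeling the top vertex of the first column, then shows that the only such $m_1$ are $T_{k,l}^{(s)}$ and the $k$ monomials $m_1^{(r)}$, giving exactly $M, M_1, \dots, M_k$.

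The other two products follow by the same method. Since $T_{k+1,0}^{(s)}T_{k-1,l}^{(s+6)} = M$, the analysis above applied to $\chi_q(T_{k-1,l}^{(s+6)})\chi_q(T_{k+1,0}^{(s)})$ shows its dominant monomials form an initial segment of $M, M_1, \dots, M_k$, and one checks that $M_k$ is not reached --- its ``distance'' below $M$ exceeds what this smaller tensor product can realise, as one sees by comparing the $U_{q_1}\widehat{\mathfrak{sl}_2}$-weights of $M$ and $M_k$ with those available in $\mathcal{T}_{k+1,0}^{(s)}$ and $\mathcal{T}_{k-1,l}^{(s+6)}$ --- leaving $M, M_1, \dots, M_{k-1}$; and $\chi_q(T_{0,l}^{(s+6k+6)})\chi_q(T_{0,3k+l+3}^{(s)})$ has only the dominant monomial $M_k$, the product of the two highest weight monomials, again by right-negativity. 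I expect the one-sidedness reduction, together with the exact enumeration of the surviving lowerings of $T_{k,l}^{(s)}$, to be the main obstacle: this is the step that genuinely uses the $G_2$ combinatorics --- in particular the asymmetry between long and short roots reflected in the very different forms of $A_{1,a}$ and $A_{2,a}$ --- and it comes down to careful but elementary tracking of spectral parameters against the explicit fundamental $q$-characters of Lemma \ref{lemma}.
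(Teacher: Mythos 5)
Your proposal follows essentially the same route as the paper's proof: use speciality and the Frenkel--Mukhin algorithm, invoke right-negativity to force the node-$2$ parts of both factors to stay at the top (so the second factor must equal its highest weight monomial, the potential $1^{-1}$-factor it would otherwise create being uncancellable), and then enumerate the surviving lowerings of the first factor as the chain of $A_{1,\bullet}^{-1}$ multiples, giving exactly $M, M_1, \dots, M_k$. The paper carries this out for case (2) and declares the other products similar, which matches your treatment of case (1) and your sketch of the remaining verifications.
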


\begin{proof}
We will prove part (2). Part (1) is similar. In the proof, we use Lemma \ref{need to expand from right most factors} frequently to show that some monomials cannot occur in a $q$-character.

{\bf Classify the dominant monomials in $\chi_{q}(\mathcal T_{k, l+3}^{(s)})\chi_{q}(\mathcal T_{k, l}^{(s+6)})$.}

Let $m_{1}'=T_{k, l+3}^{(s)}$, $m_{2}'=T_{k, l}^{(s+6)}$. Without loss of generality, we may assume that $s=0$. Then
\begin{equation*}
\begin{split}
&m_{1}'=(1_{0} 1_{6} \cdots 1_{6k-6})(2_{6k+1} 2_{6k+3} \cdots 2_{6k+2l+5}), \\
&m_{2}'=(1_{6} 1_{12} \cdots 1_{6k})(2_{6k+7} 2_{6k+9} \cdots 2_{6k+2l+5}).
\end{split}
\end{equation*}
By Theorem \ref{minimal affinizations in X1 are special}, we can use Frenkel-Mukhin algorithm to compute $\chi_{q}(m_{1}')$ and $\chi_{q}(m_{2}')$.

We want to classify all dominant monomials $m = m_{1}m_{2}$, $m_{i} \in \chi_{q}(m_{i}'), i=1, 2$. Let $m = m_{1}m_{2}$ be a dominant monomial, where $m_{i} \in \chi_{q}(m_{i}'), i=1, 2$. We denote
\begin{align*}
& m_{3} =2_{6k+1} 2_{6k+3} \cdots 2_{6k+2l+5}, \\
& m_{4} =2_{6k+7} 2_{6k+9} \cdots 2_{6k+2l+5}.
\end{align*}

We have the following cases.

{\bf Case 1.}
\begin{align*}
& m_{1} \in \chi_q(m_1') \cap  \chi_{q}(1_{0} 1_{6} \cdots 1_{6k-6})(\chi_{q}(m_{3})-m_{3}), \\
& m_{2} \in \chi_q(m_2') \cap \chi_{q}(1_{6} 1_{12} \cdots 1_{6k})(\chi_{q}(m_{4})-m_{4}).
\end{align*}

We have $m_1 = xy$, $x \in \chi_{q}(1_{0} 1_{6} \cdots 1_{6k-6})$, $y \in \chi_{q}(m_{3})-m_{3}$. By Lemma \ref{non highest monomials in q characters of KR modules are right negative}, $y$ is right negative since $L(m_3)$ is a Kirillov-Reshetikhin module. If $x = 1_{0} 1_{6} \cdots 1_{6k-6}$, then $m_1=xy$ must be right negative because the largest index in $x$ is $6k-6$ and $1_{6k-6}$ cannot cancel the negative factors in $y$ (all indices of the factors in $y$ are larger than $6k-6$). If $x \in \chi_{q}(1_{0} 1_{6} \cdots 1_{6k-6}) - 1_{0} 1_{6} \cdots 1_{6k-6}$, then $x$ is right negative since $L(1_{0} 1_{6} \cdots 1_{6k-6})$ is a Kirillov-Reshetikhin module. By Lemma \ref{properties of right negative monomials}, the product of two right negative monomials are right negative. Therefore $m_1=xy$ is right negative.

Similarly, $m_2$ is right negative. It follows that $m = m_{1}m_{2}$ is right negative and hence $m$ is not dominant. This contradicts our assumption.

{\bf Case 2.}
\begin{align*}
& m_{1} \in \chi_q(m_1') \cap  \chi_{q}(1_{0} 1_{6} \cdots 1_{6k-6})(\chi_{q}(m_{3})-m_{3}), \\
& m_{2} \in \chi_q(m_2') \cap  \chi_{q}(1_{6} 1_{12} \cdots 1_{6k})m_4.
\end{align*}
In this case, the indices of the negative factors in $m_1$ are larger than $6k+2l+5$. By Lemma \ref{possible monomials}, the largest index in $m_2$ is $6k+2l+5$. It follows that the negative factor with largest index in $m_1$ cannot be canceled by the factors in $m_2$. Therefore $m = m_{1}m_{2}$ is right negative and hence $m$ is not dominant. This contradicts our assumption.

{\bf Case 3.}
\begin{align*}
& m_{1} \in \chi_q(m_1') \cap  \chi_{q}(1_{0} 1_{6} \cdots 1_{6k-6})m_3, \\
& m_{2} \in \chi_q(m_2') \cap  \chi_{q}(1_{6} 1_{12} \cdots 1_{6k})(\chi_{q}(m_{4})-m_{4}).
\end{align*}
By using the same argument as Case 2, we have that $m=m_1 m_2$ is right negative and hence $m$ is not dominant. This contradicts our assumption.

{\bf Case 4.}
\begin{align*}
& m_{1} \in \chi_q(m_1') \cap  \chi_{q}(1_{0} 1_{6} \cdots 1_{6k-6})m_3, \\
& m_{2} \in \chi_q(m_2') \cap  \chi_{q}(1_{6} 1_{12} \cdots 1_{6k})m_4.
\end{align*}

We need the following lemma.
\begin{lemma} \label{possible monomials}
\begin{enumerate}
\item Suppose that
\begin{align*}
& m_{1} \in \chi_q(m_1') \cap \chi_{q}(1_{0} 1_{6} \cdots 1_{6k-6})m_3.
\end{align*}
Then $m_1$ is one of the following monomials:
\begin{align*}
& m_1', \\
& n_{1} = m_{1}'A_{1, 6k-3}^{-1} = 1_{0} 1_{6} \cdots 1_{6k-12} 1_{6k}^{-1} 2_{6k-5} 2_{6k-3} \cdots 2_{6k+2l+5}, \\
& n_{2} = m_{1}'A_{1, 6k-3}^{-1}A_{1, 6k-9}^{-1} = 1_{0} 1_{6} \cdots 1_{6k-18} 1_{6k-6}^{-1} 1_{6k}^{-1} 2_{6k-11} 2_{6k-9} \cdots 2_{6k+2l+5}, \\
& \cdots \\
& n_{k} = m_{1}'A_{1, 6k-3}^{-1}A_{1, 6k-9}^{-1} \cdots A_{1, 3}^{-1} = 1_{6}^{-1} \cdots 1_{6k-6}^{-1} 1_{6k}^{-1} 2_{1} 2_{3} \cdots 2_{6k+2l+5}.
\end{align*}

\item Suppose that
\begin{align*}
& m_{2} \in \chi_q(m_2') \cap \chi_{q}(1_{6} 1_{12} \cdots 1_{6k})m_4.
\end{align*}
Then $m_2$ is one of the following monomials:
\begin{align*}
& m_2', \\
& m_{2}'A_{1, 6k+3}^{-1} =  1_{6} \cdots 1_{6k-6} 1_{6k+6}^{-1} 2_{6k+1} 2_{6k+3} \cdots 2_{6k+2l+5}, \\
& m_{2}'A_{1, 6k+3}^{-1}A_{1, 6k-3}^{-1} = 1_{6} \cdots 1_{6k-12} 1_{6k}^{-1} 1_{6k+6}^{-1} 2_{6k-5} 2_{6k-3} \cdots 2_{6k+2l+5}, \\
& \cdots \\
& m_{2}'A_{1, 6k+3}^{-1}A_{1, 6k-3}^{-1} \cdots A_{1, 9}^{-1} = 1_{12}^{-1} 1_{18}^{-1} \cdots 1_{6k}^{-1} 1_{6k+6}^{-1} 2_{7} 2_{9} \cdots 2_{6k+2l+5}.
\end{align*}
\end{enumerate}
\end{lemma}

\begin{proof}
We will prove part (1). Part (2) can be proved similarly. Suppose that $m_{1} \in \chi_q(m_1') \cap \chi_{q}(1_{0} 1_{6} \cdots 1_{6k-6})m_3$. We have
\begin{align*}
m_1 \in \chi_q(m_1') \cap \chi_q(1_0 1_6 \cdots 1_{6k-12}) 1_{6k-6} m_3
\end{align*}
or
\begin{align*}
m_1 \in \chi_q(m_1') \cap \chi_q(1_0 1_6 \cdots 1_{6k-12}) (\chi_q(1_{6k-6}) - 1_{6k-6})m_3.
\end{align*}

If $m_1 \in \chi_q(m_1') \cap \chi_q(1_0 1_6 \cdots 1_{6k-12}) 1_{6k-6} m_3$, then $m_1 \in \varphi_1( m_1' )$, where the map $\varphi_1$ is defined before Theorem \ref{Uqsl_2 arguments}. By Lemma \ref{need to expand from right most factors}, we have $m_1 = m_1'$ since $\beta_1( 1_0 1_6 \cdots 1_{6k-12} 1_{6k-6} m_3 ) = 1_0 1_6 \cdots 1_{6k-6}$ is a $q_1$-string in $m_1'$ and $1_{6k-6}$ is a factor of $m_1$.

If $m_1 \in \chi_q(m_1') \cap \chi_q(1_0 1_6 \cdots 1_{6k-12}) (\chi_q(1_{6k-6}) - 1_{6k-6})m_3$, then
\begin{align*}
m_1 \in \chi_q(m_1') \cap \chi_q(1_0 1_6 \cdots 1_{6k-12}) 1_{6k}^{-1} 2_{6k-5} 2_{6k-3} 2_{6k-1} m_3
\end{align*}
since $ 2_{6k-5} 2_{6k-3} 2_{6k-1} m_3 = 2_{6k-5} 2_{6k-3} 2_{6k-1} \cdots 2_{6k+2l+5}$ is a $q_2$-string and $2_{6k+2l+5}$ is a factor of $m_1$.

By the same argument, since $2_{6k-5} 2_{6k-3} 2_{6k-1} \cdots 2_{6k+2l+5}$ is a $q_2$-string and $2_{6k+2l+5}$ is a factor of $m_1$, by Lemma \ref{need to expand from right most factors} we have that $m_1=m_1'$ or
\begin{align*}
& m_{1} = n_{1} = m_{1}'A_{1, 6k-3}^{-1} = 1_{0} 1_{6} \cdots 1_{6k-12} 1_{6k}^{-1} 2_{6k-5} 2_{6k-3} \cdots 2_{6k+2l+5},
\end{align*}
or
\begin{align*}
& m_{1} \in \chi_q(m_1') \cap \chi_{q}(1_{0} 1_{6} \cdots 1_{6k-18}) 1_{6k-6}^{-1} 1_{6k}^{-1} 2_{6k-11} 2_{6k-9} \cdots 2_{6k+2l+5}.
\end{align*}
Using the same argument, we have that $m_{1}$ must be one of the following monomials:
\begin{align*}
& m_1', \\
& n_{1} = m_{1}'A_{1, 6k-3}^{-1} = 1_{0} 1_{6} \cdots 1_{6k-12} 1_{6k}^{-1} 2_{6k-5} 2_{6k-3} \cdots 2_{6k+2l+5}, \\
& n_{2} = m_{1}'A_{1, 6k-3}^{-1}A_{1, 6k-9}^{-1} = 1_{0} 1_{6} \cdots 1_{6k-18} 1_{6k-6}^{-1} 1_{6k}^{-1} 2_{6k-11} 2_{6k-9} \cdots 2_{6k+2l+5}, \\
& \cdots \\
& n_{k} = m_{1}'A_{1, 6k-3}^{-1}A_{1, 6k-9}^{-1} \cdots A_{1, 3}^{-1} = 1_{6}^{-1} \cdots 1_{6k-6}^{-1} 1_{6k}^{-1} 2_{1} 2_{3} \cdots 2_{6k+2l+5}.
\end{align*}
\end{proof}

In this case, we have $m_{1} \in \chi_{q}(1_{0} 1_{6} \cdots 1_{6k-6})m_{3}$ and $m_{2} \in \chi_{q}(1_{6} 1_{12} \cdots 1_{6k})m_{4}$. Since $m=m_1 m_2$ is dominant, by Lemma \ref{possible monomials} we have that $m=m_1 m_2$ is one of the following dominant monomials
\begin{eqnarray*}
\begin{split}
&M= m_{1}'m_{2}', \ M_{1} =n_{1}m_{2}'=MA_{1, 6k-3}^{-1}, \ M_{2} =n_{2}m_{2}'=M\prod_{j=0}^{1}A_{1, 6k-6j-3}^{-1},\
\ldots, \\
&M_{k-1} =n_{k-1}m_{2}'=M\prod_{j=0}^{k-2}A_{1, 6k-6j-3}^{-1}, \ M_{k} =n_{k}m_{2}'=M\prod_{j=0}^{k-1}A_{1, 6k-6j-3}^{-1},
\end{split}
\end{eqnarray*}
and every monomial above has multiplicity $1$ in $\chi_{q}(\mathcal T_{k, l+3}^{(0)})\chi_{q}(\mathcal T_{k, l}^{(6)})$.

{\bf Classify the dominant monomials in $\chi_q(T_{k+1, l}^{(s)}) \chi_q(T_{k-1, l+3}^{(s+6)})$.}

Let $m_{1}'=T_{k+1, l}^{(s)}$, $m_{2}'=T_{k-1, l+3}^{(s+6)}$. Without loss of generality, we may assume that $s=0$. Then
\begin{equation*}
\begin{split}
&m_{1}'=(1_{0} 1_{6} \cdots 1_{6k})(2_{6k+7} 2_{6k+9} \cdots 2_{6k+2l+5}), \\
&m_{2}'=(1_{6} 1_{12} \cdots 1_{6k-6})(2_{6k+1} 2_{6k+3} \cdots 2_{6k+2l+5}).
\end{split}
\end{equation*}
Let $m = m_{1}m_{2}$ be a dominant monomial, where $m_{i} \in \chi_{q}(m_{i}'), i=1, 2$. By the same argument as above, we have $m_1 = m_1'$ and $m_2$ is one of the following monomials.
\begin{align*}
& p_{1} = m_{2}'A_{1, 6k-3}^{-1} = 1_{0} 1_{6} \cdots 1_{6k-12} 1_{6k}^{-1} 2_{6k-5} 2_{6k-3} \cdots 2_{6k+2l+5}, \\
& p_{2} = m_{2}'A_{1, 6k-3}^{-1}A_{1, 6k-9}^{-1} = 1_{0} 1_{6} \cdots 1_{6k-18} 1_{6k-6}^{-1} 1_{6k}^{-1} 2_{6k-11} 2_{6k-9} \cdots 2_{6k+2l+5}, \\
& \cdots \\
& p_{k-1} = m_{2}'A_{1, 6k-3}^{-1}A_{1, 6k-9}^{-1} \cdots A_{1, 9}^{-1} = 1_{12}^{-1} \cdots 1_{6k-6}^{-1} 1_{6k}^{-1} 2_{7} 2_{9} \cdots 2_{6k+2l+5}.
\end{align*}
It follows that the dominant monomials in $\chi_q(T_{k+1, l}^{(0)}) \chi_q(T_{k-1, l+3}^{(6)})$ are
\begin{eqnarray*}
\begin{split}
&M= m_{1}'m_{2}', \ M_{1} =m_{1}'p_{1}=MA_{1, 6k-3}^{-1}, \ M_{2} =m_{1'}p_{2}=M\prod_{j=0}^{1}A_{1, 6k-6j-3}^{-1},\
\ldots, \\
&M_{k-1} =m_1'p_{k-1}=M\prod_{j=0}^{k-2}A_{1, 6k-6j-3}^{-1},
\end{split}
\end{eqnarray*}
and every dominant monomial has multiplicity one in $\chi_q(T_{k+1, l}^{(0)}) \chi_q(T_{k-1, l+3}^{(6)})$.

{\bf Classify the dominant monomials in $\chi_q(T_{0, 3k+l+3}^{(s)}) \chi_q(T_{0, l}^{(s+6k+6)})$.}

Let $m_{1}'=T_{0, 3k+l+3}^{(s)}$, $m_{2}'=T_{0, l}^{(s+6k+6)}$. Without loss of generality, we may assume that $s=0$. Then
\begin{equation*}
\begin{split}
&m_{1}'= 2_1 2_3 \cdots 2_{6k+2l+5}, \\
&m_{2}'= 2_{6k+7} 2_{6k+9} \cdots 2_{6k+2l+5}.
\end{split}
\end{equation*}
Let $m = m_{1}m_{2}$ be a dominant monomial, where $m_{i} \in \chi_{q}(m_{i}'), i=1, 2$. By Lemma \ref{non highest monomials in q characters of KR modules are right negative}, if $m_1 \neq m_1'$, then $m_1$ is right negative. The index of the negative factor in $m_1$ with largest index is greater than $6k+2l+5$. If $m_2 = m_2'$, then the negative factor with largest index in $m_1$ cannot be canceled by $m_2$. Therefore $m=m_1m_2$ is not dominant which contradicts our assumption. Hence $m_2 \neq m_2'$. Therefore by Lemma \ref{non highest monomials in q characters of KR modules are right negative}, $m_2'$ is right negative. It follows that $m=m_1 m_2$ is right negative since both of $m_1$ and $m_2$ are right negative. This is a contradiction. Therefore $m_1=m_1'$.

If $m_2 \neq m_2'$, then $m_2$ is right negative and $m=m_1 m_2$ is right negative. This is a contradiction. Therefore $m_2=m_2'$. It follows that the only dominant monomial in $\chi_q(T_{0, 3k+l+3}^{(0)}) \chi_q(T_{0, l}^{(6k+6)})$ is $T_{0, 3k+l+3}^{(0)} T_{0, l}^{(6k+6)}$ and $T_{0, 3k+l+3}^{(0)} T_{0, l}^{(6k+6)}$ has multiplicity one in $\chi_q(T_{0, 3k+l+3}^{(0)}) \chi_q(T_{0, l}^{(6k+6)})$.
\end{proof}

\subsection{Proof of the equations in Theorem $\ref{first part of the M system of type G2}$}
By Lemma $\ref{lemma1}$, the dominant monomials in the $q$-characters of the left hand side and of the right hand side of every equation in Theorem $\ref{first part of the M system of type G2}$ are the same and have the same multiplicities. Therefore by Proposition \ref{dominant monomials determine q-characters}, the theorem is true.

\section{Proof of the simplicity of the modules in the summands on the right hand side of the equations in Theorem \ref{first part of the M system of type G2}}  \label{proof simplicity of the modules on the right hand side}

By Lemma $\ref{lemma1}$, the modules corresponding to the second summand of every equation in Theorem $\ref{first part of the M system of type G2}$ are special and hence they are simple. We only need to show that the modules in the first summand corresponding to every equation in Theorem $\ref{first part of the M system of type G2}$ are simple. Let $\mathcal{S}$ be a module corresponding to the first summand corresponding to an equation in Theorem $\ref{first part of the M system of type G2}$. It suffices to prove that for each non-highest dominant monomial $M$ in $\mathcal{S}$, we have $\chi_q(L(M))\not \subseteq \chi_q(\mathcal{S})$, see \cite{H06}, \cite{MY12a}.

\begin{lemma}
We consider the same cases as in Lemma $\ref{lemma1}$. In each case $M_{i}$ are the dominant monomials described by that Lemma $\ref{lemma1}$.

\begin{enumerate}[(1)]
\item For $k\in \mathbb{Z}_{\geq 1}$, $l\in \{1, 2, 3\}$, let
\begin{equation*}
\begin{split}
&t_{i} = M_{i}A_{1,s+6k-6i+3}^{-1}, \quad i=1, 2, \ldots, k-1.
\end{split}
\end{equation*}
Then for $i=1, 2, \cdots, k-1$, $t_{i} \in \chi_{q}(M_{i})$ and $t_{i}\not \in \chi_{q}(\mathcal T_{k+1, 0}^{(s)})\chi_{q}(\mathcal T_{k-1, l}^{(s+6)})$.

\item For $k, l\in \mathbb{Z}_{\geq 1}$, let
\begin{equation*}
\begin{split}
&t_{i} = M_{i}A_{1,s+6k-6i+3}^{-1}, \quad i=1, 2, \ldots, k-1.
\end{split}
\end{equation*}
Then for $i=1, 2, \cdots, k-1$, $t_{i} \in \chi_{q}(M_{i})$ and $t_{i}\not \in \chi_{q}(\mathcal T_{k+1, l}^{(s)})\chi_{q}(\mathcal T_{k-1, l+3}^{(s+6)})$.
\end{enumerate}
\end{lemma}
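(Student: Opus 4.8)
The plan is to verify the two claims in the lemma by direct computation using the Frenkel--Mukhin algorithm, exactly as in the proof of Lemma \ref{lemma1}. The key structural point is that each $M_r$ is obtained from the highest monomial $M$ by removing a "staircase" of factors $A_{1,\ast}^{-1}$, and $n_r = M_r A_{1,\ast}^{-1}$ continues that staircase one step further in a specific direction. So first I would write out $M_r$ explicitly in the $i_s$-notation (as was done for the $n_j$ in the proof of Lemma \ref{lemma1}), then compute $n_r = M_r A_{1,aq^{\bullet}}^{-1}$ explicitly, identifying precisely which $Y$-factors appear with which powers. The point of doing this is to locate a factor of $n_r$ — I expect a factor of the form $1_{c}^{-1}$ for some $c$ — that could only have been produced, when restricting to the $U_{q_1}\widehat{\mathfrak{sl}}_2$ or $U_{q_2}\widehat{\mathfrak{sl}}_2$ strings, from a monomial that does not occur in the relevant factor of the left-hand side.

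The inclusion $n_i \in \chi_q(M_i)$ should be the easy half: since $M_i$ is a dominant monomial and $n_i = M_i A_{1,a}^{-1}$ for a single $a$, one checks that $n_i$ lies in $\chi_q(L(M_i))$ by using that $M_i$ is right-negative-free in the relevant variable and that lowering by one $A_{1,a}^{-1}$ from a dominant monomial always stays in the $q$-character (this follows from the $\mathfrak{sl}_2$ theory recalled in the excerpt, applied via $\beta_1$, together with $\mathscr{M}(L(M_i)) \subseteq M_i \mathcal{Q}^-$). The harder half is $n_i \notin \chi_q(\mathcal{T}_{k,l}^{(s)})\chi_q(\mathcal{T}_{k,0}^{(s+6)})$ (resp. the analogous product). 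For this I would argue as in Lemma \ref{lemma1}: write any monomial of the product as $m = m_1 m_2$ with $m_j \in \chi_q(m_j')$, and show by the right-negativity argument and the Frenkel--Mukhin constraints on each factor that the specific combination of positive and negative powers appearing in $n_i$ cannot be realized. Concretely, one traces which $1_{\ast}^{-1}$ factors $m_1$ and $m_2$ are forced to carry given the positions of the surviving $2_{\ast}$ factors, and derives a contradiction with the shape of $n_i$.

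In carrying this out I would treat case (2) in full (as the representative case, matching the convention of Lemma \ref{lemma1}) and remark that case (1) is entirely analogous, with the spectral shifts adjusted according to the exponents in the definition of $M_r$ there. Throughout, I would normalize $s = 0$ by the shift-of-spectral-parameter map \eqref{shift}, which is harmless since all statements are invariant under $\tau_b$.

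The main obstacle I anticipate is bookkeeping: one must pin down exactly which $Y_{1,\ast}^{\pm 1}$ and $Y_{2,\ast}^{\pm 1}$ factors appear in $n_i$ and then match them against the Frenkel--Mukhin-constrained possibilities for $m_1$ and $m_2$ simultaneously, and the index arithmetic (multiples of $6$ for the $1$'s, of $2$ for the $2$'s, plus the shifts $aq^{-2l-6i+4}$, etc.) is error-prone. The conceptual content is light — it is the same right-negativity-plus-FM-algorithm mechanism used to prove Lemma \ref{lemma1} — but getting the single "witness" factor of $n_i$ and the contradiction stated correctly for all $i = 1, \ldots, k-1$ uniformly is where the real work lies. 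Once that witness factor is identified, the conclusion $\chi_q(L(M_i)) \not\subseteq \chi_q(\mathcal{S})$ follows immediately, and hence, by the criterion of \cite{Her06}, \cite{MY12a}, the module $\mathcal{S}$ in the first summand is irreducible.
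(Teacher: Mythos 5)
Your proposal is correct and follows essentially the same route as the paper: $n_i \in \chi_q(M_i)$ is verified via the restriction to $U_{q_1}\widehat{\mathfrak{sl}}_2$ (the paper's ``$U_{q^{3}}(\widehat{\mathfrak{sl}}_2)$ argument''), and $n_i \notin \chi_{q}(\mathcal T_{k, l+3}^{(s)})\chi_{q}(\mathcal T_{k, l}^{(s+6)})$ is obtained by factoring a putative occurrence as $m_1m_2$ with $m_j\in\chi_q(m_j')$ and showing the Frenkel--Mukhin constraints forbid the required $A_{1,\ast}^{-1}$ (which in $n_r$ occurs \emph{twice} at the same spectral parameter) from being absorbed by either tensor factor. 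One caution: your blanket claim that multiplying a dominant monomial by a single $A_{1,a}^{-1}$ \emph{always} yields a monomial of the $q$-character is false for general $a$; it holds here only because the chosen $a$ sits exactly one step above the top of a node-$1$ string of $\beta_1(M_i)$, which is precisely what the $\mathfrak{sl}_2$ reduction must (and does) check.
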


\begin{proof}
We will prove part (2). Part (1) is similar. Without loss of generality, we may assume that $s=0$. By definition, we have
\begin{equation*}
\begin{split}
& T_{k+1, l}^{(0)} = 1_{0} 1_{6} \cdots 1_{6k-6} 1_{6k} 2_{6k+7} 2_{6k+9} \cdots 2_{6k+2l+5}, \\
& T_{k-1, l+3}^{(6)} = 1_{6} 1_{ 12} \cdots 1_{6k-6} 2_{6k+1} 2_{6k+3} \cdots 2_{6k+2l+5}.
\end{split}
\end{equation*}
Let $i \in \{1,2,\ldots,k-1\}$. Then
\begin{equation*}
\begin{split}
M_{i} & = M \prod_{j=0}^{i-1}A_{1, 6k-6j-3}^{-1} \\
& = T_{k+1, l}^{(0)} T_{k-1, l+3}^{(6)} \prod_{j=0}^{i-1}A_{1, 6k-6j-3}^{-1} \\
& =  1_{0} 1_{6}^2 \cdots 1_{6k-6i-6}^2 1_{6k-6i} 2_{6k-6i+1} 2_{6k-6i+3} \cdots 2_{6k+5} 2_{6k+7}^2 \cdots 2_{6k+2l+5}^2.
\end{split}
\end{equation*}

By Theorem \ref{Uqsl_2 arguments}, the monomial
\begin{align*}
& M_{i}A_{1,6k-6i+3}^{-1} \\
& \scalemath{0.9}{
= 1_{0} 1_{6}^2 \cdots 1_{6k-6i-6}^2 1_{6k-6i+6}^{-1} 2_{6k-6i+1}^2 2_{6k-6i+3}^2 2_{6k-6i+5}^2 2_{6k-6i+7} 2_{6k-6i+9} \cdots 2_{6k+5} 2_{6k+7}^2 \cdots 2_{6k+2l+5}^2
}
\end{align*}
is in $\chi_{q}(M_{i})$.

We have
\begin{align*}
t_{i} & = M_{i}A_{1,6k-6i+3}^{-1} \\
& = \left( T_{k+1, l}^{(0)} T_{k-1, l+3}^{(6)} \prod_{j=0}^{i-1}A_{1, 6k-6j-3}^{-1} \right) A_{1,6k-6i+3}^{-1} \\
& = \left( T_{k+1, l}^{(0)} A_{1,6k-6i+3}^{-1} \right) \left(  T_{k-1, l+3}^{(6)} \prod_{j=0}^{i-1}A_{1, 6k-6j-3}^{-1} \right).
\end{align*}

By Theorem \ref{Uqsl_2 arguments}, the monomial
\begin{align*}
& T_{k-1, l+3}^{(6)} \prod_{j=0}^{i-1}A_{1, 6k-6j-3}^{-1} \\
& = 1_{6} 1_{ 12} \cdots 1_{6k-6} 2_{6k+1} 2_{6k+3} \cdots 2_{6k+2l+5} \prod_{j=0}^{i-1}A_{1, 6k-6j-3}^{-1} \\
& = 1_{6} 1_{ 12} \cdots 1_{6k-6i-12} 1_{6k-6i-6} 1_{6k-6i+6}^{-1} 1_{6k-6i}^{-1} \cdots 1_{6k}^{-1} 2_{6k-6i+1} 2_{6k-6i+3} \cdots 2_{6k+2l+5}
\end{align*}
is in $\chi_q(T_{k-1, l+3}^{(6)})$. Since $1_{6k-6i}$ is not a factor of $T_{k-1, l+3}^{(6)} \prod_{j=0}^{i-1}A_{1, 6k-6j-3}^{-1}$ (this monomial is in $\chi_q(T_{k-1, l+3}^{(6)})$), we have that the monomial $\left(T_{k-1, l+3}^{(6)} \prod_{j=0}^{i-1}A_{1, 6k-6j-3}^{-1}\right) A_{1,6k-6i+3}^{-1}$ is not in $\chi_q(T_{k-1, l+3}^{(6)})$ by the Frenkel-Mukhin algorithm.

Therefore if
\begin{align*}
t_{i}=\left( T_{k+1, l}^{(0)} A_{1,6k-6i+3}^{-1} \right) \left(  T_{k-1, l+3}^{(6)} \prod_{j=0}^{i-1}A_{1, 6k-6j-3}^{-1} \right)
\end{align*}
were in $\chi_{q}(\mathcal T_{k+1, l}^{(0)})\chi_{q}(\mathcal T_{k-1, l+3}^{(6)})$, then $T_{k+1, l}^{(0)} A_{1,6k-6i+3}^{-1}$ would be in $\chi_{q}(T_{k+1, l}^{(0)})$. This implies that $T_{k+1, l}^{(0)} A_{1,6k-6i+3}^{-1} \in \varphi_1(T_{k+1, l}^{(0)})$, where the map $\varphi_1$ is defined before Theorem \ref{Uqsl_2 arguments}, which contradicts Lemma \ref{need to expand from right most factors}: $\beta_1(T_{k+1,l}^{(0)})=1_{0} 1_{6} \cdots 1_{6k}$ is a $q_1$-string in $T_{k+1, l}^{(0)}$, $1_{6k}$ is a factor of $T_{k+1, l}^{(0)} A_{1,6k-6i+3}^{-1}$, but $\beta_1( T_{k+1, l}^{(0)} A_{1,6k-6i+3}^{-1} ) \neq \beta_1(T_{k+1, l}^{(0)})$. Therefore $t_i$ is not in $\chi_{q}(\mathcal T_{k+1, l}^{(0)})\chi_{q}(\mathcal T_{k-1, l+3}^{(6)})$.
\end{proof}

\section{Proof of Theorem \ref{second part of M system}} \label{sec: second part of M system}
In this section, we prove Theorem \ref{second part of M system}.

\begin{theorem}[{Theorem 7.2, \cite{LM13}}]
The module $\widetilde{\mathcal T}_{k, l}^{(s)}$, $s \in \mathbb{Z}, k, l\in \mathbb{Z}_{\geq 0}$ are anti-special.
\end{theorem}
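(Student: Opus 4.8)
The plan is to deduce the anti-specialness of $\widetilde{\mathcal T}_{k,l}^{(s)}$ from the specialness of the modules $\mathcal T_{k,l}^{(s)}$ (Theorem 3.8 in \cite{Her07}, recalled above) by means of the duality functor $V\mapsto {}^{*}V$ (left dual) on the category of finite-dimensional $U_q\widehat{\mathfrak g}$-modules. The key point is that ${}^{*}L(m_+)$ is again simple and that its $q$-character is obtained from $\chi_q(L(m_+))$ by replacing each monomial $\prod_{i,a}Y_{i,a}^{u_{i,a}}$ by $\prod_{i,a}Y_{i,\,c a^{-1}}^{-u_{i,a}}$, where $c\in\mathbb C^{\times}$ is a fixed scalar and where in type $G_2$ there is no Dynkin-diagram automorphism to insert; see \cite{FR98}, \cite{FM01}. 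Since this operation reverses the sign of every exponent, it carries dominant monomials to anti-dominant monomials bijectively, so $V$ is special if and only if ${}^{*}V$ is anti-special. As $\mathcal T_{k,l}^{(s)}$ is special, ${}^{*}\mathcal T_{k,l}^{(s)}$ is therefore anti-special.

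Next I would identify ${}^{*}\mathcal T_{k,l}^{(s)}$ up to a shift of spectral parameters. Restricted to $U_q\mathfrak g$, the module $\mathcal T_{k,l}^{(s)}$ has highest weight $\lambda=k\omega_1+l\omega_2$, and since the longest element of the Weyl group of $G_2$ acts as $-\mathrm{id}$ its lowest weight is $-\lambda$; hence the highest $l$-weight of ${}^{*}\mathcal T_{k,l}^{(s)}$ has $U_q\mathfrak g$-weight $\lambda$. Moreover ${}^{*}\mathcal T_{k,l}^{(s)}$ is again a minimal affinization of $V(\lambda)$, because the class of minimal affinizations is stable under duality (\cite{CP95b}, \cite{Her07}), so by the classification recalled in Section \ref{definition of minimal affinizations} it is, up to a spectral shift, either $\mathcal T_{k,l}^{(0)}$ or $\widetilde{\mathcal T}_{k,l}^{(0)}$. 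But the substitution $a\mapsto ca^{-1}$ reverses the natural ordering of spectral parameters, so it sends $T_{k,l}^{(s)}$ --- in which the variables $Y_{1,\ast}$ carry smaller spectral parameters than the $Y_{2,\ast}$ --- to a monomial of the opposite shape, i.e. one from the $\widetilde T$-family. Hence ${}^{*}\mathcal T_{k,l}^{(s)}$ is a spectral shift of $\widetilde{\mathcal T}_{k,l}^{(0)}$. Since a spectral shift $\tau_b$ merely permutes the monomials of a $q$-character without changing any exponent, it preserves the property of having a unique anti-dominant monomial; therefore $\widetilde{\mathcal T}_{k,l}^{(0)}$ is anti-special, and then, using (\ref{shift}), so is $\widetilde{\mathcal T}_{k,l}^{(s)}$ for every $s\in\mathbb Z$. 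The cases $k=0$ or $l=0$ are Kirillov--Reshetikhin modules and are handled in exactly the same way.

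The step I expect to require the most care is the identification in the second paragraph: one must pin down the scalar $c$ attached to the duality functor in type $G_2$ (it is a fixed power of $q$ determined by the dual Coxeter number, composed with the spectral shift needed to return to the chosen $a$-line) and verify that duality genuinely interchanges the two possible orientations of the ``path'' describing a minimal affinization, rather than fixing the orientation. Should this bookkeeping prove awkward, an alternative, self-contained route is to argue directly with the Frenkel--Mukhin algorithm applied ``from the bottom'': introduce the notion of a \emph{left-positive} monomial dual to that of a right-negative monomial, check that each $A_{i,a}$ is left-positive and that a product of left-positive monomials is left-positive, and then mirror the case analysis of Lemma \ref{lemma1} around the lowest $l$-weight of $\widetilde{\mathcal T}_{k,l}^{(s)}$ to show that its $q$-character contains no anti-dominant monomial other than the lowest one; the only real difficulty there is the length of the mirrored computation.
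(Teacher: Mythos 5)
The paper does not prove this statement itself: it is quoted from \cite{LM13}, and the companion result it records, Lemma~\ref{lemma2}, already contains the entire argument. Indeed, the ring homomorphism $\iota\colon Y_{i,aq^{s}}\mapsto Y_{i,aq^{12-s}}^{-1}$ reverses the sign of every exponent, hence carries dominant monomials bijectively to anti-dominant ones; since $\iota(\chi_q(\mathcal T_{k,l}^{(s)}))=\chi_q(\widetilde{\mathcal T}_{k,l}^{(s)})$ and $\mathcal T_{k,l}^{(s)}$ is special by \cite{Her07}, the module $\widetilde{\mathcal T}_{k,l}^{(s)}$ has exactly one anti-dominant monomial. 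Your proposal implements the same transfer principle, but routes it through the left-dual functor, and there is one inaccuracy you should correct: the effect of duality on $q$-characters is \emph{not} given by a single substitution $Y_{i,a}\mapsto Y_{i,ca^{-1}}^{-1}$ with a scalar $c$ fixed once and for all. Already for $U_q\widehat{\mathfrak{sl}}_2$ the dual of $L(Y_a)$ is $L(Y_{aq^{\mp2}})$, so the centre of the required spectral inversion depends on $a$, i.e.\ on the module; a module-independent inversion such as $\iota$ is duality composed with the pullback through the automorphism inverting the spectral parameter, which is why it can be written with the fixed reflection $s\mapsto 12-s$. This does not sink your argument, because all you actually use is the module-wise statement that the $l$-weights of ${}^{*}V$ are the (antipode-twisted) inverses of those of $V$ with multiplicities preserved, which does give ``$V$ special $\Leftrightarrow{}^{*}V$ anti-special''; and your identification of ${}^{*}\mathcal T_{k,l}^{(s)}$ as a spectral shift of $\widetilde{\mathcal T}_{k,l}^{(0)}$ (via stability of minimal affinizations under duality, $w_0=-\mathrm{id}$, and the reversal of the orientation of the highest $l$-weight monomial) is exactly the bookkeeping carried out in the proof of Theorem~\ref{dual-M-system}, combined with (\ref{shift}). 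So your route is sound once the fixed-scalar claim is replaced by the module-wise version (or by $\iota$ itself), but it buys nothing over invoking Lemma~\ref{lemma2} directly, and the ``left-positive'' mirrored Frenkel--Mukhin computation you sketch as a fallback is unnecessary.
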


\begin{lemma}[{Lemma 7.3, \cite{LM13}}]\label{lemma2}
Let $\iota: \mathbb{Z}\mathcal{P}\rightarrow \mathbb{Z}\mathcal{P}$ be a homomorphism of rings such that $Y_{1, aq^{s}}\mapsto Y_{1, aq^{12-s}}^{-1}$, $Y_{2, aq^{s}}\mapsto Y_{2, aq^{12-s}}^{-1}$ for all $a \in \mathbb{C}^{\times}, s \in \mathbb{Z}$. Then
\begin{align*}
\chi_{q}(\widetilde{\mathcal T}_{k, l}^{(s)})=\iota(\chi_{q}(\mathcal T_{k, l}^{(s)})).
\end{align*}
\end{lemma}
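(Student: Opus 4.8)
The plan is to read $\iota$ as the $q$-character shadow of a (twisted) duality operation and then to reduce the statement to a single, explicit monomial identity. First I would record the elementary properties of $\iota$, all of which follow by direct computation from the definitions of $A_{1,a}$, $A_{2,a}$ and of $\iota$. Namely: $\iota$ is a ring \emph{involution} of $\mathbb{Z}\mathcal{P}$ (one checks $\iota^{2}=\mathrm{id}$); it satisfies $\iota(A_{i,aq^{s}})=A_{i,aq^{12-s}}^{-1}$ for $i\in I$; consequently it exchanges the monoids $\mathcal{Q}^{+}$ and $\mathcal{Q}^{-}$, reverses the partial order $\leq$ on $\mathcal{P}$, exchanges dominant with anti-dominant monomials, and exchanges right-negative monomials with their ``left-negative'' analogues; and it is compatible with restriction to the two subalgebras $U_{q_i}\widehat{\mathfrak{sl}_2}$ in the sense that $\beta_{i}\circ\iota$ equals the analogous involution of $\mathbb{Z}[Y_{a}^{\pm1}]$ composed with $\beta_{i}$, for $i=1,2$. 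It is also worth noting the factorization $\iota=\tau_{q^{12}}\circ\nu\circ\omega$, where $\nu$ inverts every $Y_{i,a}$ and $\omega$ is the spectral reflection $Y_{i,aq^{t}}\mapsto Y_{i,aq^{-t}}$, and that $\omega(T_{k,l}^{(s)})=\widetilde T_{k,l}^{(s)}$ (immediate from the definitions in Section \ref{definition of minimal affinizations}).

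By Theorem~3.8 of \cite{Her07} the module $\mathcal T_{k,l}^{(s)}$ is special, so $\chi_q(\mathcal T_{k,l}^{(s)})$ is the output of the Frenkel--Mukhin algorithm applied to $T_{k,l}^{(s)}$ (this is exactly the set-up used in the proof of Lemma~\ref{lemma1}); dually, $\widetilde{\mathcal T}_{k,l}^{(s)}$ is anti-special, so $\chi_q(\widetilde{\mathcal T}_{k,l}^{(s)})$ is the output of the anti-dominant analogue of the Frenkel--Mukhin algorithm applied to its unique anti-dominant monomial. The properties of $\iota$ in the first paragraph say precisely that $\iota$ conjugates the first algorithm into the second; hence $\iota\bigl(\chi_q(\mathcal T_{k,l}^{(s)})\bigr)$ is the anti-dominant Frenkel--Mukhin output starting from $\iota(T_{k,l}^{(s)})$. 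Since the anti-dominant algorithm, started from the unique anti-dominant monomial of an anti-special module, reproduces its $q$-character, the theorem reduces to the assertion that $\iota(T_{k,l}^{(s)})$ \emph{is} the unique anti-dominant monomial of $\chi_q(\widetilde{\mathcal T}_{k,l}^{(s)})$. (Conceptually $\iota$ realizes a twisted duality functor on $\mathrm{Rep}\,U_q\widehat{\mathfrak{g}}$; the shift $12$ is the one that makes the lowest $l$-weight monomials of the fundamental modules equal to $1_{12}^{-1}$ and $2_{12}^{-1}$, as one reads off Lemma~\ref{lemma}.)

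To finish, I would argue as follows. The monomial $\iota(T_{k,l}^{(s)})$ is anti-dominant (as $\iota$ exchanges dominant and anti-dominant monomials), and since $\iota$ negates $U_q\mathfrak{g}$-weights its weight is $-(k\omega_{1}+l\omega_{2})=w_{0}(k\omega_{1}+l\omega_{2})$, the lowest weight of $V(k\omega_{1}+l\omega_{2})$. Because $\widetilde{\mathcal T}_{k,l}^{(s)}$ is a minimal affinization of $V(k\omega_{1}+l\omega_{2})$, its restriction to $U_q\mathfrak{g}$ contains $V(k\omega_{1}+l\omega_{2})$ with multiplicity one, and the $w_{0}(k\omega_{1}+l\omega_{2})$-weight space of $V(k\omega_{1}+l\omega_{2})$ is one-dimensional; hence $\widetilde{\mathcal T}_{k,l}^{(s)}$ has exactly one $l$-weight of that weight, which is necessarily its unique anti-dominant monomial. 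It therefore suffices to show $\iota(T_{k,l}^{(s)})\in\mathscr M(\widetilde{\mathcal T}_{k,l}^{(s)})$, which one does by running the Frenkel--Mukhin algorithm: write $\iota(T_{k,l}^{(s)})=\widetilde T_{k,l}^{(s)}\prod A_{i,\cdot}^{-1}$ for the explicit product of inverse $A$'s (which, after unwinding $\iota=\tau_{q^{12}}\circ\nu\circ\omega$, amounts to replacing each factor $Y_{i,c}$ of $T_{k,l}^{(s)}$ by $Y_{i,cq^{12}}^{-1}$), and check that the intermediate monomials are non-dominant, exactly the bookkeeping carried out in the proof of Lemma~\ref{lemma1}, organized by an induction on $k+l$.

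The one genuinely non-formal ingredient is this last step: the explicit determination of the lowest $l$-weight monomial of $\mathcal T_{k,l}^{(s)}$ (equivalently, of $\widetilde{\mathcal T}_{k,l}^{(s)}$) and the verification that it coincides with $\iota(T_{k,l}^{(s)})$. Everything else is formal manipulation of $\iota$ together with the specialness/anti-specialness already stated in the excerpt, but this monomial identification is a finite and somewhat intricate Frenkel--Mukhin computation, and that is where I expect the real effort to lie.
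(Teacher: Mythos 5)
The paper does not actually prove this statement --- it is imported verbatim as Lemma~7.3 of \cite{LM13} --- so there is no in-paper argument to measure you against; I can only judge the proposal on its own terms. Its architecture is the standard one and is correct as far as it goes: the computations $\iota^{2}=\mathrm{id}$ and $\iota(A_{i,aq^{s}})=A_{i,aq^{12-s}}^{-1}$ check out, the compatibility $\beta_i\circ\iota$ with the $\mathfrak{sl}_2$-restrictions is the right reason why $\iota$ intertwines the Frenkel--Mukhin algorithm with its anti-dominant analogue (you should still cite the anti-dominant version of the FM theorem for anti-special modules, e.g.\ from \cite{MY12a}, since it is an input and not a formality), and your weight argument is sound: since $w_0=-\mathrm{id}$ for $G_2$, only the multiplicity-one constituent $V(\lambda)$ can contribute to the $(-\lambda)$-weight space of $\widetilde{\mathcal T}_{k,l}^{(s)}$, and the unique anti-dominant monomial is the unique monomial of that lowest weight. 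This correctly reduces the whole lemma to the single claim $\iota(T_{k,l}^{(s)})\in\mathscr M(\widetilde{\mathcal T}_{k,l}^{(s)})$, equivalently to identifying the lowest $l$-weight monomial of $\mathcal T_{k,l}^{(s)}$.

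That last claim, however, is exactly the non-formal content of the lemma, and you defer it (``where I expect the real effort to lie''), so as written the proof is incomplete at its crux. The good news is that no inductive Frenkel--Mukhin bookkeeping is needed: your own weight argument closes the gap if you apply it to a tensor product of fundamental modules rather than to $\widetilde{\mathcal T}_{k,l}^{(s)}$ directly. Indeed, $L(m_+)$ is a subquotient of $\bigotimes_k L(Y_{i_k,a_k})$ with $m_+=\prod_k Y_{i_k,a_k}$ (Lemma~\ref{contains in a larger set}); by the same convexity argument the $w_0(\mathrm{wt}\,m_+)$-weight space of this tensor product is the tensor product of the one-dimensional lowest weight spaces of the factors, so the unique monomial of lowest weight in $L(m_+)$ is $\prod_k(\text{lowest monomial of }L(Y_{i_k,a_k}))$. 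From Lemma~\ref{lemma} the lowest monomials of the fundamentals are $1_{12}^{-1}$ and $2_{12}^{-1}$, i.e.\ the substitution is $i_s\mapsto i_{s+12}^{-1}$ (this is precisely the fact the paper uses, also without proof, in the proof of Theorem~\ref{dual-M-system}). Applying $\iota$ then sends the lowest monomial of $\mathcal T_{k,l}^{(s)}$ to the monomial obtained from $T_{k,l}^{(s)}$ by $i_s\mapsto i_{-s}$, which is $\widetilde T_{k,l}^{(s)}$, and dually $\iota(T_{k,l}^{(s)})$ is the anti-dominant monomial of $\widetilde{\mathcal T}_{k,l}^{(s)}$. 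With that paragraph added, your proof is complete; without it, it is a correct reduction rather than a proof.
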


\begin{proof}[{\bf Proof of Theorem \ref{second part of M system}}]
The lowest weight monomial of $ \chi_q(\mathcal{T}_{k,l}^{(s)}) $ is obtained from the highest weight monomial of $ \chi_q(\mathcal{T}_{k,l}^{(s)}) $ by the substitutions: $1_s \mapsto 1^{-1}_{12+s}$, $2_s \mapsto 2^{-1}_{12+s}$. After we apply $\iota$ to $\chi_{q}(\mathcal T_{k, l}^{(s)})$, the lowest weight monomial of $ \chi_q(\mathcal{T}_{k,l}^{(s)}) $ becomes the highest weight monomial of $\iota(\chi_{q}(\mathcal T_{k, l}^{(s)}))$. Therefore the highest weight monomial of $\iota(\chi_{q}(\mathcal T_{k, l}^{(s)}))$ is obtained from the lowest weight monomial of $ \chi_q(\mathcal{T}_{k,l}^{(s)}) $ by the substitutions: $1_s \mapsto 1^{-1}_{12-s}$, $2_s \mapsto 2^{-1}_{12-s}$. It follows that the highest weight monomial of $\iota(\chi_{q}(\mathcal T_{k, l}^{(s)}))$ is obtained from the highest weight monomial of $ \chi_q(\mathcal{T}_{k,l}^{(s)}) $ by the substitutions: $1_s \mapsto 1_{-s}$, $2_s \mapsto 2_{-s}$. Therefore the second part of the M-system is obtained from the first part of the M-system by applying $\iota$ to both sides of every equation in the first part of the M-system.

The simplicity of every module corresponding to the summands on the right hand side of every equation in Theorem \ref{second part of M system} follows from the simplicity of the modules corresponding to the summands on the right hand side of the equations in Theorem \ref{first part of the M system of type G2} and Lemma $\ref{lemma2}$.
\end{proof}

\section*{Acknowledgements}
The authors are very grateful to the anonymous referee for the comments and suggestions that have been very helpful to improve the quality of this paper. J.-R. Li is very grateful to Professors David Hernandez, Bernard Leclerc, and Evgeny Mukhin for stimulating discussions. The research of J.-R. Li on this project is supported by the Minerva foundation with funding from the Federal German Ministry for Education and Research; the National Natural Science Foundation of China (no. 11501267, 11371177, 11401275); ERC AdG Grant 247049; the PBC Fellowship Program of Israel for Outstanding Post-Doctoral Researchers from China and India; the European Research Council (ERC) under the European Union's Horizon 2020 research and innovation programme (QUASIFT grant agreement 677368). J.-R. Li is very grateful to Institut des Hautes Etudes Scientifiques (IHES) and Mainz Institute for Theoretical Physics (MITP) for hospitality where a part of this work has been done.


\begin{thebibliography}{99999999}
\bibitem[C95]{C95} V. Chari, \textit{Minimal affinizations of representations of quantum groups: the rank $2$ case}, Publ. Res. Inst. Math. Sci. \textbf{31} (1995) no. 5, 873--911.

\bibitem[CG11]{CG11} V. Chari , J. Greenstein, \textit{Minimal affinizations as projective objects}, J. Geom. Phys. \textbf{61} (2011) no. 9, 1717--1732.

\bibitem[CMY13]{CMY13} V. Chari, A. Moura, C. A. S. Young, \textit{Prime representations from a homological perspective}, Math. Z. \textbf{274} (2013) 1-2, 613--645

\bibitem[CP91]{CP91} V. Chari, A. Pressley, \textit{Quantum affine algebras}, Comm. Math. Phys. \textbf{142} (1991) no. 2, 261--283.

\bibitem[CP94]{CP94} V. Chari, A. Pressley, \textit{A guide to quantum groups}, Cambridge University Press, Cambridge (1994) xvi+651 pp.

\bibitem[CP95a]{CP95a} V. Chari, A. Pressley, \textit{Quantum affine algebras and their representations}, Representations of groups (Banff, AB, 1994) 59--78, CMS Conf. Proc., \textbf{16}, Amer. Math. Soc., Providence, RI, 1995.

\bibitem[CP95b]{CP95b} V. Chari, A. Pressley, \textit{Minimal affinizations of representations of quantum groups: the nonsimple-laced case}, Lett. Math. Phys. \textbf{35} (1995) no. 2, 99--114.

\bibitem[CP97]{CP97} V. Chari, A. Pressley, \textit{Factorization of representations of quantum affine algebras}, Modular interfaces (Riverside, CA, 1995) AMS/IP Stud. Adv. Math., vol. \textbf{4}, Amer. Math. Soc., Provdence, RI (1997) 33--40.

\bibitem[Dri88]{Dri88} V. G. Drinfeld, \textit{A new realization of Yangians and of quantum affine algebras}, (Russian) Dokl. Akad. Nauk SSSR \textbf{296} (1987) no. 1, 13--17;
translation in Soviet Math. Dokl. \textbf{36} (1988) no. 2, 212--216.

\bibitem[FM01]{FM01} E. Frenkel, E. Mukhin, \textit{Combinatorics of $q$-characters of finite-dimensional representations of quantum affine algebras},
Comm. Math. Phys. \textbf{216} (2001) no. 1, 23--57.

\bibitem[FR98]{FR98} E. Frenkel, N. Yu. Reshetikin, \textit{The $q$-characters of representations of quantum affine algebras and deformations of W-algebras}, Recent developments in quantum affine algebras and related topics (Raleigh, NC, 1998) 163--205, Contemp. Math., \textbf{248}, Amer. Math. Soc., Providence, RI, 1999.

\bibitem[FZ02]{FZ02} S. Fomin, A. Zelevinsky, \textit{cluster algebras I: Foundations}, J. Amer. Math. Soc. \textbf{15} (2002) 497--529.

\bibitem[GG14]{GG14} J. Grabowski, S. Gratz, \textit{Cluster algebras of infinite rank, with an appendix by Michael Groechenig}, J. Lond. Math. Soc. \textbf{89} (2014) no. 2, 337--363.

\bibitem[H05]{H05} D. Hernandez, \textit{Monomials of $q$ and $q,t$-characters for non simply-laced quantum affinizations}, Math. Z. \textbf{250} (2005), 443--473.

\bibitem[H06]{H06} D. Hernandez, \textit{The Kirillov-Reshetikhin conjecture and solutions of T-systems}, J. Reine Angew. Math. \textbf{596} (2006) 63--87.

\bibitem[H07]{H07} D. Hernandez, \textit{On minimal affinizations of representations of quantum groups},
Comm. Math. Phys. \textbf{276} (2007) no. 1, 221--259.

\bibitem[H08]{H08} D. Hernandez, \textit{Smallness problem for quantum affine algebras and quiver varieties}, Ann. Sci. Ecole Norm. Sup. (4) \textbf{41} (2008), 271--306.

\bibitem[HL10]{HL10} D. Hernandez, B. Leclerc, \textit{Cluster algebras and quantum affine algebras}, Duke Math. J. \textbf{154} (2010) no. 2, 265--341.

\bibitem[HL16]{HL16} D. Hernandez, B. Leclerc, \textit{A cluster algebra approach to $q$-characters of Kirillov-Reshetikhin modules}, J. Eur. Math. Soc. (JEMS) \textbf{18} (2016), no. 5, 1113--1159.

\bibitem[LM13]{LM13} J.-R. Li, E. Mukhin, \textit{Extended T-system of type $G_2$}, SIGMA Symmetry, Integrability Geom. Methods Appl. \textbf{9} (2013), Paper 054, 28 pp.

\bibitem[LN15]{LN15} J.-R. Li, K. Naoi, \textit{Graded limits of minimal affinizations over the quantum loop algebra of type $G_2$}, Algebras and Representation Theory, \textbf{19}  (2016), 957--973.

\bibitem[M10]{M10} A. Moura, \textit{Restricted limits of minimal affinizations}, Pacific J. Math. \textbf{244} (2010) no. 2, 359--397.

\bibitem[MP11]{MP11} A. Moura, F. Pereira, \textit{Graded limits of minimal affinizations and beyond: the multiplicity free case for type $E_6$}, Algebra Discrete Math. \textbf{12} (2011) no. 1,  69--115.

\bibitem[MY12a]{MY12a} E. Mukhin, C. A. S. Young, \textit{Extended T-systems}, Selecta Math.(N.S.) \textbf{18} (2012) no. 3, 591--631.

\bibitem[MY12b]{MY12b} E. Mukhin, C. A. S. Young, \textit{Path description of type B q-characters}, Adv. Math. \textbf{231} (2012) no. 2, 1119--1150.

\bibitem[MY14]{MY14} E. Mukhin, C. A. S. Young, \textit{Affinization of category $\mathcal{O}$ for quantum groups}, Trans. Amer. Math. Soc. \textbf{366} (2014) 4815--4847.

\bibitem[Nao13]{Nao13} K. Naoi, \textit{Demazure modules and graded limits of minimal affinizations}, Represent. theory, \textbf{17} (2013) 524--556.

\bibitem[ZDLL15]{ZDLL15}  Q.-Q. Zhang, B. Duan, J.-R. Li, Y.-F. Luo, \textit{M-Systems and Cluster Algebras}, Int. Math. Res. Not. IMRN \textbf{2016}, no. 14, 4449--4486.


\end{thebibliography}
\end{document}